\documentclass[a4paper,12pt]{amsart}
\usepackage{amsmath,amsfonts,amssymb,amsthm,latexsym,amscd}
\usepackage[dvips]{graphicx}
\usepackage[T1]{fontenc}

\usepackage{parskip}

\newcommand{\B}[1]{{\mathbf #1}}
\newcommand{\C}[1]{{\mathcal #1}}

%%%%%%%%%%%%%%%%%%%%%%%%%%%%%%%%%%%%%%%%%%%%%%%%%
\newtheorem{thm}{Theorem}[section]
\newtheorem{thm*}{Theorem}
\newtheorem{lem}[thm]{Lemma}
\newtheorem{prop}[thm]{Proposition}
\newtheorem{cor}[thm]{Corollary}

\newtheorem*{q*}{Question}

\theoremstyle{definition}

\newtheorem{rem}[thm]{Remark}
\newtheorem*{rem*}{Remark}
\newtheorem*{rems*}{Remarks}
\newtheorem*{cor*}{Corollary}

\def\B{\mathbf}
%%%%%%%%%%%%%%%%%%%%%%%%%%%%%%%%%%%%%%%%%%%%%%%%%

%Greek characters

%ABBREVIATIONS

\newcommand\Diff{\operatorname{Diff}}

\newcommand\Ham{\operatorname{Ham}}

\newcommand\Aut{\operatorname{Aut}}

\newcommand\vol{\operatorname{vol}}
\newcommand\OP{\operatorname}

%special fonts and characters---Michael

\def\C{\mathcal{C}}
\def\D{\mathbf{D}}
\def\Diff{\operatorname{Diff}}
\def\G{\mathcal{G}}

\def\cH{\mathcal{H}}
\def\Id{\operatorname{Id}}
\def\Im{\operatorname{Im}}

\def\Frag{\operatorname{Frag}}

\def\X{\operatorname{X}}

\def\a{\alpha}
\def\b{\beta}
\def\area{\operatorname{area}}

\def\ev{\operatorname{ev}}
\def\emb{\operatorname{emb}}

\def\g{\gamma}
\def\o{\omega}
\def\s{\sigma}

\begin{document}

\title[Bi-invariant metrics and quasi-morphisms]{Bi-invariant metrics and quasi-morphisms on groups of Hamiltonian diffeomorphisms of surfaces}
\author{Michael Brandenbursky}

%\thanks{thanks}
\keywords{groups of Hamiltonian diffeomorphisms, braid groups, mapping class groups, quasi-morphisms, bi-invariant metrics}
\subjclass[2010]{Primary 53; Secondary 57}

\begin{abstract}
Let $\B\Sigma_g$ be a closed orientable surface of genus $g$ and let
$\Diff_0(\B\Sigma_g,\area)$ be the identity component of the group of area-preserving diffeomorphisms of $\B\Sigma_g$. In this work we present the extension of Gambaudo-Ghys construction to the case of a closed hyperbolic surface $\B\Sigma_g$, i.e. we show that every non-trivial homogeneous quasi-morphism on the braid group on $n$ strings of $\B\Sigma_g$ defines a non-trivial homogeneous quasi-morphism on the group $\Diff_0(\B\Sigma_g,\area)$. As a consequence we give another proof of the fact that the space of homogeneous quasi-morphisms on $\Diff_0(\B\Sigma_g,\area)$ is infinite dimensional.

Let $\Ham(\B\Sigma_g)$ be the group of Hamiltonian diffeomorphisms of $\B\Sigma_g$. As an application of the above construction we construct two injective homomorphisms $\B Z^m\to\Ham(\B\Sigma_g)$, which are bi-Lipschitz with respect to the word metric on $\B Z^m$ and the autonomous and fragmentation metrics on $\Ham(\B\Sigma_g)$. In addition, we construct a new infinite family of Calabi quasi-morphisms on $\Ham(\B\Sigma_g)$.
\end{abstract}

\maketitle

\section{Introduction}

\label{S:intro}

Let $\B\Sigma_g$ be a closed orientable surface of genus $g$ equipped with the area form, and let $\Diff_0(\B\Sigma_g,\area)$ be the identity component of the group of area-preserving diffeomorphisms of $\B\Sigma_g$. Denote by $\D^2$ and $\B S^2$ the unit $2$-disc in a plane and the standard $2$-sphere respectively. In \cite{GG} Gambaudo and Ghys gave a construction of quasi-morphisms on $\Diff(\D^2,\area)$ and on $\Diff(\B S^2,\area)$ from quasi-morphisms on the Artin pure braid group $\B P_n$ and on the pure sphere braid group $\B P_n(\B S^2)$ respectively. In this paper we are going to extend their construction to the case of $\Diff_0(\B\Sigma_g,\area)$, when $g>1$, and to the case of $\Ham(\B\Sigma_g)$, when $g=1$. Here $\Ham(\B\Sigma_g)$ is the group of Hamiltonian diffeomorphisms of $\B\Sigma_g$.  More precisely, we are going to show that every non-trivial homogeneous quasi-morphism on the full surface braid group $\B B_n(\B\Sigma_g)$ defines a non-trivial homogeneous quasi-morphism on the group $\Diff_0(\B\Sigma_g,\area)$.

As a consequence of this construction, we construct two injective homomorphisms $\B Z^m\to\Ham(\B\Sigma_g)$, which are bi-Lipschitz with respect to the word metric on $\B Z^m$ and the autonomous and fragmentation metrics on $\Ham(\B\Sigma_g)$ respectively. In particular, we generalize the results of the author and Kedra proved for the case of $\Diff(\D^2,\area)$, see \cite{BK2}.

A quasi-morphism on $\Ham(\B\Sigma_g)$ is called Calabi if it equals to the Calabi homomorphism on every diffeomorphism supported in a displaceable disc. They have various applications in dynamics and symplectic geometry, see e.g. \cite{BEP,EP}. Calabi quasi-morphisms on $\Ham(\B \Sigma_g)$ were constructed by Entov-Polterovich \cite{EP} (in case when $g=0$) and by Py \cite{Py,Py1} (in case when $g\geq1$). In this paper we give a new topological construction of infinitely many linearly independent Calabi quasi-morphisms on $\Ham(\B\Sigma_g)$ for every $g>1$.

\subsection{Generalized Gambaudo-Ghys construction and main results}

\label{SS:GG+results}

Let us start with a definition.
A function $\psi\colon \Gamma \to~\B R$ from a group $\Gamma$ to the reals is called a {\em quasi-morphism}
if there exists a real number $A\geq 0$ such that
$$
|\psi(fh) - \psi(f) - \psi(h)|\leq A
$$
for all $f,h\in \Gamma$. The infimum of such $A$'s is called the
\emph{defect} of $\psi$ and is denoted by $D_\psi$.
If $\psi(f^n)=n\psi(f)$ for all $n\in \B Z$
and all $g\in \Gamma$ then $\psi$ is called \emph{homogeneous}. Any
quasi-morphism $\psi$ can be homogenized by setting
$$\overline{\psi} (f) := \lim_{p\to +\infty} \frac{\psi (f^p)}{p}.$$
The vector space of homogeneous quasi-morphisms on $\Gamma$
is denoted by $Q(\Gamma)$. The space of homogeneous quasi-morphisms on $\Gamma$ modulo the space of homomorphisms on $\Gamma$ is denoted by $\widehat{Q}(\Gamma)$. For more information about quasi-morphisms and their connections to different brunches of mathematics, see \cite{Calegari}.

For every pair of points $x,y\in\B\Sigma_g$ let us choose a geodesic path $s_{xy}\colon[0,1]\to \B\Sigma_g$ from $x$ to $y$.
Let $f_t\in \Diff_0(\B\Sigma_g,\area)$ be an isotopy from the identity to $f\in\Diff_0(\B\Sigma_g,\area)$
and let $z\in \B\Sigma_g$ be a basepoint. For $y\in\B\Sigma_g$ we define a loop
$\gamma_{y}\colon [0,1]\to\B\Sigma_g$ by
\begin{equation}\label{eq:gamma-for-qm}
\gamma_{y}(t):=
\begin{cases}
s_{zy}(3t) &\text{ for } t\in \left [0,\frac13\right ]\\
f_{3t-1}(y) &\text{ for } t\in \left [\frac13,\frac23\right ]\\
s_{f(y)z}(3t-2) & \text{ for } t\in \left [\frac23,1\right ].
\end{cases}
\end{equation}

Let $\X_n(\B\Sigma_g)$ be the configuration space of all ordered $n$-tuples
of pairwise distinct points in the surface $\B\Sigma_g$. It's fundamental group
$\pi_1(\X_n(\B\Sigma_g))$ is identified with the pure surface braid group $\B P_n(\B\Sigma_g)$.
Let point $z=(z_1,\ldots,z_n)$ in $\X_n(\B\Sigma_g)$ be a base point. For almost each point $x=(x_1,\ldots,x_n)\in\X_n(\B\Sigma_g)$ the $n$-tuple of loops $(\gamma_{x_1},\ldots,\gamma_{x_n})$ is
a based loop in the configuration space $\X_n(\B\Sigma_g)$.

Let $g>1$. Then the group $\Diff_0(\B\Sigma_g,\area)$ is simply-connected
(see e.g. \cite[Section 7.2]{P-book}). Hence the based homotopy
class of this loop does not depend on the choice of the
isotopy $f_t$. Let $\gamma(f,x)\in \B P_n(\B\Sigma_g)$
be an element represented by this loop.

Let $g=1$. In this case the group $\Diff_0(\B\Sigma_1,\area)$ is not simply-connected. However, the group $\Ham(\B\Sigma_1)$ is simply-connected (see e.g. \cite[Section 7.2]{P-book}). Hence, for every Hamiltonian isotopy $f_t$ between $\Id$ and $f$, the based homotopy
class of the loop $(\gamma_{x_1},\ldots,\gamma_{x_n})$ does not depend on the choice of the isotopy $f_t$.

Let $\varphi\colon \B B_n(\B\Sigma_g)\to \B R$ be a homogeneous quasi-morphism, where $\B B_n(\B\Sigma_g)$ is a full braid group on $n$ strings on a surface $\B\Sigma_g$. For $g>1$ we define the map
$$\Phi_n\colon \Diff_0(\B\Sigma_g,\area)\to \B R$$
by
\begin{equation}\label{eq:GG-ext}
\Phi_n(f):=
\int\limits_{\X_n(\B\Sigma_g)}\varphi(\g(f;{x}))d{x}\qquad\qquad
\overline{\Phi}_n(f):=\lim_{p\to +\infty}\frac{\Phi_n(f^p)}{p}
\thinspace .
\end{equation}
For $g=1$ the map $\Phi_n\colon \Ham(\B\Sigma_1)\to \B R$ is defined as in \eqref{eq:GG-ext}.

\begin{thm*}\label{T:gen-GG}
Let $g>1$. Then the function $\overline{\Phi}_n\colon\Diff_0(\B\Sigma_g,\area)\to~\B R$ is a well defined homogeneous quasi-morphism.
Let $g=1$. Then the function $\overline{\Phi}_n\colon\Ham(\B\Sigma_1)\to\B R$ is a well defined homogeneous quasi-morphism.
\end{thm*}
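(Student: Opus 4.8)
The plan is to first prove the statement for the non-homogenized functional $\Phi_n$, establishing that it is a genuine quasi-morphism with an explicit defect bound, and then to obtain $\overline{\Phi}_n$ by the homogenization procedure recalled in the introduction, which converts any quasi-morphism into a homogeneous one of comparable defect. For $\Phi_n$ three things must be verified: that $\g(f;x)\in\B B_n(\B\Sigma_g)$ is defined for almost every $x$ and depends measurably on $x$; that $x\mapsto\varphi(\g(f;x))$ is integrable over $\X_n(\B\Sigma_g)$; and that $\Phi_n$ satisfies the quasi-morphism inequality. Throughout I use that $\X_n(\B\Sigma_g)$ is an open subset of the compact manifold $\B\Sigma_g^n$, hence has finite volume for the measure induced by $\area$.

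The algebraic heart of the argument is an exact cocycle identity for $\g$. Given isotopies realizing $f$ and $h$, I concatenate the loop $\g(h;x)$, which runs $z\to x\to h(x)\to z$, with the loop $\g(f;h(x))$, which runs $z\to h(x)\to fh(x)\to z$. The middle portion $z\to h(x)\to z$ retraces a geodesic configuration path, so choosing the geodesics consistently with $s_{ba}=\overline{s_{ab}}$ makes it cancel, and the concatenation is based-homotopic in $\X_n(\B\Sigma_g)$ to $\g(fh;x)$. Hence
\[
\g(fh;x)=\g(f;h(x))\cdot\g(h;x)
\]
for almost every $x$ (the order being immaterial for what follows). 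Applying the defining inequality of $\varphi$ pointwise gives $|\varphi(\g(fh;x))-\varphi(\g(f;h(x)))-\varphi(\g(h;x))|\le D_\varphi$, and integrating over $\X_n(\B\Sigma_g)$ bounds $|\Phi_n(fh)-\int\varphi(\g(f;h(x)))\,dx-\Phi_n(h)|$ by $D_\varphi\cdot\vol(\X_n(\B\Sigma_g))$. The remaining point is that the $f$-integral is taken over the $h$-translated variable: here I use that $h$ is area-preserving, so $x\mapsto h(x)$ preserves the induced measure on $\X_n(\B\Sigma_g)$, and the change of variables yields $\int\varphi(\g(f;h(x)))\,dx=\Phi_n(f)$. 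This shows $\Phi_n$ is a quasi-morphism with defect at most $D_\varphi\cdot\vol(\X_n(\B\Sigma_g))$, and homogenizing proves the theorem.

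I expect the main obstacle to be integrability. A homogeneous quasi-morphism grows at most linearly in word length, so $|\varphi(\g(f;x))|\le C\,\|\g(f;x)\|$ with $C$ depending only on $\varphi$ and a fixed generating set; the task is to dominate the word length of $\g(f;x)$ by an $L^1$ function of $x$. The geodesic and trajectory segments have lengths bounded uniformly in $x$ by compactness of $\B\Sigma_g$ and smoothness of the chosen isotopy, so the only source of unbounded braiding is the behaviour near the fat diagonal, where two strands $\g_{x_i},\g_{x_j}$ approach one another and may wind. This winding contribution grows at most like $\log(1/\mathrm{dist}(x_i,x_j))$, and since $\int_0^\varepsilon \log(1/r)\,r\,dr$ converges such a logarithmic singularity is integrable against the area form on the two-dimensional factor; hence $\varphi(\g(f;\cdot))$ is integrable over the finite-volume space $\X_n(\B\Sigma_g)$. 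Measurability and almost-everywhere definedness follow by discarding the measure-zero set of configurations for which some geodesic or trajectory segment produces a collision.

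Finally, the case $g=1$ is handled by the same computation carried out on $\Ham(\B\Sigma_1)$ in place of $\Diff_0(\B\Sigma_g,\area)$: simple-connectivity of $\Ham(\B\Sigma_1)$ guarantees that $\g(f;x)$ is independent of the chosen Hamiltonian isotopy and hence well defined, while Hamiltonian diffeomorphisms are area-preserving, so the change-of-variables step in the cocycle estimate goes through verbatim.
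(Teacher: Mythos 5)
Your outer skeleton coincides with the paper's: the cocycle identity $\g(fh;x)=\g(f;h(x))\cdot\g(h;x)$, the pointwise bound by $D_\varphi$, the change of variables using that $h$ preserves the measure on $\X_n(\B\Sigma_g)$, the resulting defect bound $D_\varphi\cdot\vol(\X_n(\B\Sigma_g))$, and homogenization at the end. Where you diverge is exactly at the point you yourself call the main obstacle: finiteness of $\Phi_n(f)$, i.e.\ integrability of $x\mapsto\varphi(\g(f;x))$, and here your argument has a genuine gap. The claim that the only source of unbounded braiding is pairwise winding near the fat diagonal, and that this winding is $O\bigl(\log(1/\mathrm{dist}(x_i,x_j))\bigr)$, is asserted rather than proven, and as stated it does not suffice. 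Your inequality $|\varphi(\g(f;x))|\le C\,l(\g(f;x))$ requires a bound on the \emph{word length} of the braid, and for $n\geq 3$ word length is not controlled by pairwise winding or linking data: powers of a commutator of two band generators $A_{i,j}$, $A_{i,k}$ in the Artin subgroup $\B P_3$ have trivial pairwise linking numbers yet word length growing linearly in the exponent. Moreover, on a closed surface the generating set of $\B B_n(\B\Sigma_g)$ contains surface generators (strands wrapping around handles), and "trajectory segments have uniformly bounded length" does not by itself convert into a word-length bound; that conversion needs an actual argument (lifting to the universal cover, crossing counts, Gronwall-type estimates for the isotopy). In short, your plan rests precisely on the hard technical lemma, and that lemma is missing.

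The paper never proves such a direct estimate on the closed surface; it establishes finiteness structurally. Using the Calabi homomorphism it writes $f=f'\circ h_1\circ\cdots\circ h_{2g}$ with $f'\in\ker(\C_g)$ and each $h_i$ an explicit rotation supported in an annular neighborhood of one of the curves $\a_i,\b_i$; by Banyaga's fragmentation, $f'=f_1\circ\cdots\circ f_m$ with each $f_i$ supported in a disc $\B D_i\subset\B\Sigma_g$. For a disc-supported $f_i$ the braid $\g(f_i;x)$ lies, up to conjugation, in an Artin subgroup $\B B_j<\B B_n(\B\Sigma_g)$, the configuration integral splits according to how many points lie in $\B D_i$, and finiteness is imported from the known disc case \cite[Lemma 4.1]{B}; for the annulus rotations $h_i$ the integral is computed explicitly as in \cite[Section 5.2]{GG}. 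Finiteness for a general $f$ then follows by applying the quasi-morphism inequality $(m+2g)$ times, which is legitimate because that inequality only needs the pointwise a.e.\ bound by $D_\varphi$ together with finiteness for the factors. If you want to keep your direct route, you must actually prove an integrable majorant for $l(\g(f;x))$ on a closed hyperbolic surface, essentially re-deriving and extending the disc lemma; the decomposition argument is what lets the paper avoid doing so.
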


\begin{rem}
Note that when $n=1$ the group $\B B_1(\B\Sigma_g)=\pi_1(\B\Sigma_g)$. In this case Theorem \ref{T:gen-GG} was proved by
L. Polterovich in \cite{P} and by Gambaudo-Ghys in \cite{GG}. We must add that when $g=1$ and $n=1$ the Polterovich quasi-morphism $\overline{\Phi}_1$ becomes trivial on $\Ham(\B\Sigma_1)$.
\end{rem}

By Theorem \ref{T:gen-GG}, in case when $g>1$, the above construction defines linear map
$$\G_{n,g}\colon Q(\B B_n(\B\Sigma_g))\to Q(\Diff_0(\B\Sigma_g,\area)),$$
and in case when $g=1$ it defines linear map
$$\G_{n,1}\colon \widehat{Q}(\B B_n(\B\Sigma_1))\to Q(\Ham(\B\Sigma_1)).$$

\begin{thm*}\label{T:prop-GG-map}
Let $g\neq1$. The map $\G_{n,g}\colon Q(\B B_n(\B\Sigma_g))\to Q(\Diff_0(\B\Sigma_g,\area))$ is injective for each $n\in\B N$.
\end{thm*}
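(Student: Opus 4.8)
The plan is to deduce injectivity from the fact that $\G_{n,g}$ is linear, so it suffices to show that a nonzero $\varphi\in Q(\B B_n(\B\Sigma_g))$ yields a quasi-morphism $\overline{\Phi}_n=\G_{n,g}(\varphi)$ that is nonzero on a single, explicitly constructed diffeomorphism. I focus on $g>1$; the case $g=0$ is the original setting of Gambaudo and Ghys \cite{GG}. The first step is a reduction to the pure braid group. Since $[\B B_n(\B\Sigma_g):\B P_n(\B\Sigma_g)]=n!$ and a homogeneous quasi-morphism $\psi$ satisfies $\psi(f)=\tfrac1m\psi(f^m)$, the restriction $Q(\B B_n(\B\Sigma_g))\to Q(\B P_n(\B\Sigma_g))$ is injective: for any $f$ some power $f^m$ with $1\le m\le n!$ lands in $\B P_n(\B\Sigma_g)$, so $\varphi$ is determined by $\varphi|_{\B P_n(\B\Sigma_g)}$. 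This step is not merely convenient but forced, because the class $\g(f,x)$ always lies in $\B P_n(\B\Sigma_g)$, so $\overline{\Phi}_n$ sees exactly $\varphi|_{\B P_n(\B\Sigma_g)}$. Hence $\varphi\neq0$ gives a pure braid $b$ with $\varphi(b)\neq0$, which is what I aim to detect.

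Next I would realize $b$ geometrically. Represent $b$ by a based loop $(\g_1,\dots,\g_n)$ in $\X_n(\B\Sigma_g)$ and extend the motion of the points $z_i$ along the $\g_i$ to an ambient area-preserving isotopy $f_t$, using the area-preserving isotopy extension theorem, whose total support is a thin neighborhood $N$ of the traces $\bigcup_i\g_i([0,1])$. I would arrange $f=f_1$ to be the identity on a product of small discs $B_i\ni z_i$, carried rigidly along $\g_i$ and back, so that $f$ fixes pointwise the open set of configurations $V=\{x:\ x_i\in B_i\ \forall i\}$ and $\g(f,x)=b$ for every $x\in V$. Because $f$ fixes $V$, concatenating the isotopy shows $\g(f^p,x)=b^p$ for all $x\in V$ and all $p$, while any configuration disjoint from $N$ produces the constant loop and contributes $0$.

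The heart of the argument is the evaluation of the integral defining $\overline{\Phi}_n$. Using that a strand outside $N$ stays constant, I would decompose $\X_n(\B\Sigma_g)$ according to the number $k$ of points meeting $N$: for such a configuration, $\g(f^p,x)$ is the image, under the strand-adding inclusion $\B P_k(N)\hookrightarrow\B P_n(\B\Sigma_g)$, of a $k$-strand braid on the points inside $N$, and $\varphi$ pulls back to a homogeneous quasi-morphism $\iota_k^{*}\varphi$ on $\B B_k(N)$ whose value is unaffected by the (fixed, exterior) positions of the remaining strands. Integrating out those $n-k$ exterior points gives, with $A=\vol(\B\Sigma_g)$,
\[
\overline{\Phi}_n(f)=\sum_{k=0}^{n}\binom{n}{k}\,(A-\vol(N))^{\,n-k}\;\overline{\Phi}^{N}_k\bigl(f|_N;\ \iota_k^{*}\varphi\bigr),
\]
where $\overline{\Phi}^{N}_k$ denotes the Gambaudo--Ghys construction on the subsurface $N$ for $k$ strands. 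The localized contribution of $V$ supplies a main term $\varphi(b)\,\vol(V)\neq0$ sitting inside the top ($k=n$) summand.

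The step I expect to be the main obstacle is ruling out cancellation, i.e.\ showing that this main term is not swamped by the contributions of configurations that interact with the moving support: since shrinking the tubes shrinks $V$ as well, the naive bound $|\varphi(\g(f^p,x))|=O(p)$ together with the $\tfrac1p$ of the homogenization leaves the error a priori of the same order as the main term. The way I would resolve this is through the displayed decomposition rather than a crude estimate: by choosing $b$ to be realized inside a subsurface $N$ and invoking injectivity of the construction on $N$, the nonvanishing of $\overline{\Phi}_n(f)$ reduces to the nonvanishing of $\overline{\Phi}^{N}_n(\iota_n^{*}\varphi)$ with $\iota_n^{*}\varphi(b)=\varphi(b)\neq0$. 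For the planar part of $\varphi$ this is precisely the injectivity of the classical disc construction of \cite{GG,BK2}, which serves as the base case; the genuinely delicate point, which I would isolate and treat with care, is the measure-theoretic justification of the decomposition and the guarantee that $b$ can be chosen so that $\iota_n^{*}\varphi$ is detected, reducing the surface statement to this model situation.
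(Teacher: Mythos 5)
Your setup agrees with the paper's (which follows Ishida \cite{I}): reduce to a pure braid $\b\in\B P_n(\B\Sigma_g)$ with $\varphi(\b)\neq 0$, realize $\b$ by an area-preserving isotopy whose time-one map $f$ is the identity on small discs around the basepoints, and decompose the integral according to which points meet the moving region. The genuine gap is at the step you yourself flag: ruling out cancellation. By taking $N$ to be a \emph{thin} neighbourhood of the traces of $\b$ you put yourself in exactly the regime where this cannot be done: the main term $\varphi(\b)\vol(V)$ is tiny, while the configurations interacting with the support fill almost all of $\X_n(\B\Sigma_g)$ and contribute, after homogenization, terms of a priori comparable size; in your displayed formula these are the uncontrolled $k<n$ summands. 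Your proposed fix --- ``invoke injectivity of the construction on $N$'' --- is circular: for a general $\varphi\in Q(\B B_n(\B\Sigma_g))$ the detecting braid $\b$ need not be planar (indeed $\varphi$ may vanish identically on the subgroup $\B B_n<\B B_n(\B\Sigma_g)$), so $N$ must be a subsurface with genus or essential topology, and injectivity of the Gambaudo--Ghys map for such an $N$ is precisely the statement being proved; the disc case of \cite{GG,BK2,I} cannot serve as its base case. Moreover, even granting injectivity on $N$, that yields nonvanishing of the single top summand for \emph{some} diffeomorphism supported in $N$, not nonvanishing of the whole sum for your $f$. A further flaw: when $\B\Sigma_g\setminus N$ is disconnected (typical once $\b$ winds around handles), the pullback $\iota_k^{*}\varphi$ depends on which components the $n-k$ stationary strands occupy, so the clean factor $(A-\vol(N))^{\,n-k}$ in your formula is not justified.

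The paper resolves the cancellation problem by going to the opposite extreme. Following Bellingeri's generating set of $\B P_n(\B\Sigma_g)$, it builds $f$ from explicit rotations of discs and annuli so that $f$ carries disjoint discs $U_1,\dots,U_n$ in the pattern of $\b$, and --- this is the key degree of freedom --- the total area of $U=U_1\cup\dots\cup U_n$ can be pushed arbitrarily close to $\area(\B\Sigma_g)$. Then (i) for $x\notin\X_n(U)$, a word-length (crossing-number) estimate in a fixed finite generating set $\mathbf{S}$ of $\B B_n(\B\Sigma_g)$ bounds $\lim_{p\to\infty}|\varphi(\g(f^p;x))|/p$ uniformly in $x$ and in the choice of $U$, so this contribution tends to $0$ as $\area(U)\to\area(\B\Sigma_g)$; and (ii) for $x\in\X_n(U)$ the integrand equals $\varphi(\b)$ on configurations placing one point in each disc (by conjugation invariance), and Ishida's polynomial-in-the-areas argument turns the nonvanishing coefficient coming from $\varphi(\b)\neq0$ into a uniform lower bound $K>0$ for suitable areas. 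Your proposal is missing both mechanisms: a uniform bound making the exterior contribution negligible, and a non-cancellation argument for the interior one; without them the decomposition alone does not prove injectivity.
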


\begin{rem}
In case when $g=1$, one can similarly show that the map $\G_{n,1}\colon \widehat{Q}(\B B_n(\B\Sigma_1))\to Q(\Ham(\B\Sigma_1))$ is injective.
In case when $g=0$, Theorem \ref{T:prop-GG-map} was proved by Ishida~\cite{I}.
\end{rem}

\subsection{Applications}
\label{SS:Applications}

\subsubsection{Autonomous metric}

Recall that $\B\Sigma_g$ is a compact orientable surface of genus $g$. Let $H\colon\B\Sigma_g\to \B R$ be a smooth function with zero mean.
There exists a unique vector field $X_H$ such that
$$\iota_{X_H}\o=dH.$$
It is easy to see that $X_H$ is tangent to the level sets of $H$. Let $h$ be the
time-one map of the flow $h_t$ generated by $X_H$. The diffeomorphism $h$ is
area-preserving and every diffeomorphism arising in this way is called
{\em autonomous}. Such a diffeomorphism is relatively easy to understand
in terms of its generating function.

Let $\Ham(\B\Sigma_g)$ be the group of Hamiltonian diffeomorphisms of $\B\Sigma_g$. Since it is a simple group \cite{Ba}, every Hamiltonian diffeomorphism of $\B\Sigma_g$ is a composition of finitely many autonomous diffeomorphisms. The set of such diffeomorphisms is invariant under conjugation. Hence we define the conjugation-invariant {\em autonomous norm} on the group $\Ham(\B\Sigma_g)$ by
$$
\|f\|_{\OP{Aut}}:=
\min\left\{m\in \B N\,|\,f=h_1\circ\cdots\circ h_m \right\},
$$
where each $h_i$ is autonomous.
The associated metric is defined by
$${\B d}_{\OP{Aut}}(f,h):=\|fh^{-1}\|_{\OP{Aut}}.$$
Since the autonomous norm is conjugation-invariant, the autonomous metric is bi-invariant.

\begin{thm*}\label{T:main}
Let $g>1$. Then for every natural number $m\in \B N$ there exists an
injective homomorphism
$$T\colon\B Z^m\hookrightarrow\Ham(\B\Sigma_g)$$
which is bi-Lipschitz with respect to the word metric on $\B Z^m$ and the autonomous metric on
$\Ham(\B\Sigma_g)$.
\end{thm*}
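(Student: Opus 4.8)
The plan is to construct $T$ together with an auxiliary family of homogeneous quasi-morphisms that bound word length from below, using the standard interplay between the autonomous norm and quasi-morphisms. The guiding principle is this: if $\psi\in Q(\Ham(\B\Sigma_g))$ vanishes on every autonomous diffeomorphism, then writing $f=h_1\circ\cdots\circ h_k$ with $k=\|f\|_{\OP{Aut}}$ and each $h_j$ autonomous gives $|\psi(f)|\le\sum_j|\psi(h_j)|+(k-1)D_\psi=(k-1)D_\psi$, so $\|f\|_{\OP{Aut}}\ge|\psi(f)|/D_\psi$. Observe also that a homogeneous quasi-morphism bounded on autonomous maps automatically vanishes on them, since $h^p$ is again autonomous and $\psi(h^p)=p\,\psi(h)$. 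Hence the task reduces to producing, for each $m$, an $m$-tuple of homogeneous quasi-morphisms vanishing on autonomous maps whose values separate the coordinate directions of $\B Z^m$.

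To build $T$ I would fix $m$ pairwise disjoint embedded discs $D_1,\dots,D_m\subset\B\Sigma_g$ and, in each $D_i$, a Hamiltonian diffeomorphism $f_i$ supported in $D_i$ of eggbeater type, $f_i=a_i\circ b_i$ with $a_i,b_i$ autonomous, arranged so that the isotopy from $\Id$ to $f_i$ stirs a chosen configuration of points inside $D_i$ as a pseudo-Anosov braid. Disjoint supports make the $f_i$ commute, so $T(v):=f_1^{v_1}\cdots f_m^{v_m}$ is a homomorphism $\B Z^m\to\Ham(\B\Sigma_g)$. The upper Lipschitz bound is immediate, as $f_i^{v_i}$ is a product of $2|v_i|$ autonomous maps and therefore $\|T(v)\|_{\OP{Aut}}\le 2\sum_i|v_i|=2\|v\|_1$.

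For the lower bound, for a suitable number of strands $n$ I would attach to each $i$ a homogeneous quasi-morphism $\varphi_i$ on $\B B_n(\B\Sigma_g)$ tuned to detect the pseudo-Anosov braiding taking place inside $D_i$, and set $\psi_i:=\G_{n,g}(\varphi_i)$ restricted to $\Ham(\B\Sigma_g)$, which is a well-defined homogeneous quasi-morphism by Theorem~\ref{T:gen-GG} (the family being non-trivial by the injectivity in Theorem~\ref{T:prop-GG-map}). The decisive structural point is that $v\mapsto\psi_i(T(v))$ is a homogeneous quasi-morphism on the abelian group $\B Z^m$, hence a genuine homomorphism, so $\psi_i(T(v))=\sum_j A_{ij}v_j$ exactly, where $A_{ij}=\psi_i(f_j)$. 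Granting that each $\psi_i$ vanishes on autonomous maps and that $A=[A_{ij}]$ is invertible---in fact diagonal with non-zero entries in our construction---the first paragraph yields $\|T(v)\|_{\OP{Aut}}\ge\max_i|\psi_i(T(v))|/D_{\psi_i}\ge c\,\|v\|_1$ with $c=\min_i|A_{ii}|/(m\max_i D_{\psi_i})$, which simultaneously gives the lower Lipschitz bound and the injectivity of $T$.

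The heart of the matter, and the step I expect to be hardest, is the braid-dynamical dichotomy behind the matrix $A$: that an autonomous Hamiltonian, preserving the foliation by its level sets, induces integrable and essentially reducible braiding on almost every configuration, on which a well-chosen $\varphi_i$ vanishes (so that $\psi_i$ vanishes on autonomous maps, and $A_{ij}=0$ for $j\ne i$ because $f_j$ is supported away from $D_i$), while the eggbeater $f_i$ produces genuine pseudo-Anosov braiding inside $D_i$ on which $\varphi_i$ has a non-zero homogenized average, forcing $A_{ii}\ne0$. Making this precise requires controlling the element $\gamma(h^p,x)\in\B P_n(\B\Sigma_g)$ for autonomous $h$ as $p\to\infty$, handling configuration points that cross $\partial D_i$, and proving that the integral over $\X_n(\B\Sigma_g)$ defining $\psi_i$ neither fails to converge nor collapses to zero on the diagonal; it is here that the hyperbolic geometry of $\B\Sigma_g$ and the specific choice of the $\varphi_i$ (for instance Bestvina--Fujiwara quasi-morphisms adapted to pseudo-Anosov elements) must be used in an essential way. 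That this is genuinely delicate is signalled by the fact that the $n=1$ quasi-morphisms of Polterovich do \emph{not} vanish on autonomous maps with essential level circles, so the argument must exploit $n\ge2$.
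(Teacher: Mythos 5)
Your skeleton is exactly the paper's: quasi-morphisms on $\Ham(\B\Sigma_g)$ vanishing on $\OP{Aut}$ bound the autonomous norm from below via the defect, homogeneous quasi-morphisms restrict to homomorphisms on the abelian group generated by commuting diffeomorphisms, and a non-singular matrix $A_{ij}=\psi_i(f_j)$ yields the bi-Lipschitz lower bound. However, the two facts you ``grant'' are the entire mathematical content of the theorem, and the mechanism you propose for one of them is wrong. You claim $A_{ij}=0$ for $j\neq i$ ``because $f_j$ is supported away from $D_i$''. This cannot work: $\psi_i=\G_{n,g}(\varphi_i)$ integrates $\varphi_i(\g(f_j;x))$ over \emph{all} of $\X_n(\B\Sigma_g)$, homogeneous quasi-morphisms are conjugation-invariant, and any two embedded discs of equal area are carried one to the other by an area-preserving diffeomorphism isotopic to the identity; hence the value of $\psi_i$ on an eggbeater placed in $D_j$ equals its value on the same eggbeater placed in $D_i$. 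Braid quasi-morphisms are blind to where on the surface the braiding occurs. In particular, if you used the same model eggbeater in every disc, $A$ would have all entries equal, hence be singular for $m\geq 2$. The paper produces the diagonal structure by linear algebra, not by spatial separation: it takes $m$ linearly independent $\varphi_i\in Q_{\mathcal{MCG}_g^3}(\B B_3,S_{g,3})$ whose restrictions to $\B B_3$ remain independent, and invokes Theorem \ref{T:prop-GG-map} together with Lemmas 3.10 and 3.11 of \cite{BK2} to find dynamically distinct diffeomorphisms $h_j$, each supported in a disc of area $r<1/m$, with $\G_{3,g}(\varphi_i)(h_j)=\delta_{ij}$; only afterwards are the supports made disjoint by conjugating the $h_j$, which changes no values and serves solely to make the generators commute.

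The other granted fact --- that the $\psi_i$ vanish on all autonomous diffeomorphisms --- is Lemma \ref{L:Aut-zero}, and your sketch omits the two ideas that make it true. Even for $n\geq 2$ the braiding traced by an autonomous flow is not ``essentially trivial'': for almost every configuration, $\g(h^p;x)$ is, up to factors of uniformly bounded length, a product of commuting braids which are arbitrary powers of point-pushes around arbitrary simple closed curves (Propositions \ref{P:finite-conj-classes} and \ref{P:gen-finite-conj-classes}), so vanishing of $\varphi_i$ on infinitely many conjugacy classes must somehow be imposed. The paper's two ideas are: (i) these braids fall into \emph{finitely} many conjugacy classes, the set $S_{g,n}$, but only after conjugating in $\mathcal{MCG}_g^n$ rather than in $\B B_n(\B\Sigma_g)$ --- the homeomorphisms matching level curves of different Hamiltonians are not braids --- so one must work with quasi-morphisms on $\B B_n(\B\Sigma_g)$ that \emph{extend} to $\mathcal{MCG}_g^n$; and (ii) quasi-morphisms on $\mathcal{MCG}_g^n$ vanishing on the finite set $S_{g,n}$ yet remaining infinitely linearly independent on $\B B_n$ exist, by Proposition 4.9 of \cite{BBF} applied to a punctured-disc subsurface (whence $n=3$). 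Without the finiteness statement, the passage to the mapping class group, and the Bestvina--Bromberg--Fujiwara input, ``choose $\varphi_i$ vanishing on the braiding of autonomous flows'' is an infinite list of constraints that you have no way to satisfy simultaneously with non-triviality; this, and not the bi-Lipschitz bookkeeping, is the core of the proof.
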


\textbf{Remarks.}
\begin{enumerate}
\item
The study of bi-invariant metrics on groups of geometric origin was initiated by
Burago, Ivanov and Polterovich \cite{BIP}. The autonomous metric is a particular case of such metrics. Moreover, it is the most natural word metric on the group of Hamiltonian diffeomorphisms of a symplectic manifold. It is particulary interesting in the two-dimensional case due to the following observation.

Let $H\colon \B\Sigma_g\to \B R$ be a Morse function and let $h$ be a Hamiltonian diffeomorphism generated by $H$. After cutting the surface $\B\Sigma_g$ along critical level sets we are left with finite number of regions, so that each one of them is diffeomorphic to annulus. By Arnol'd-Liouville theorem \cite{Arnold}, there exist angle-action symplectic coordinates on each one of these annuli, so that $h$ rotates each point on a regular level curve with the same speed, i.e. the speed depends only on the level curve. It follows that a generic Hamiltonian diffeomorphism of $\B\Sigma_g$ may be written as a finite composition of autonomous diffeomorphisms, such that each one of these diffeomorphisms is "almost everywhere rotation" in right coordinates, and hence relatively simple. Of course this decomposition is nor unique, neither canonical. However, it is plausible that it might be useful in dynamical systems.

\item
Let $\Diff(\B D^2,\area)$ be the group of smooth compactly supported
area-preserving diffeomorphisms of the open unit disc $\B D^2$ in the Euclidean plane. In \cite{BK2} together with Kedra we showed that for every positive number $k\in \B N$ there exists an injective homomorphism $\B Z^k\to \Diff(\B D^2,\area)$ which is bi-Lipschitz
with respect to the word metric on $\B Z^k$ and the autonomous metric on
$\Diff(\B D^2,\area)$.
We should mention that Gambaudo and Ghys showed that the diameter of the group of area-preserving diffeomorphisms of the 2-sphere equipped with the autonomous metric is infinite as well, see \cite[Section 6.3]{GG}.

\item
Let $(\B M, \o)$ be a closed symplectic manifold. It is interesting to know whether the autonomous metric is unbounded on the group of Hamiltonian diffeomorphisms $\Ham(\B M)$, and if yes then can one prove Theorem \ref{T:main} in the case when the dimension of
$\B M$ is greater than two?
\end{enumerate}

\subsubsection{Fragmentation metric}

Let $(\B M,\o)$ be a closed symplectic manifold, and denote by $\Ham(\B M)$ the group of Hamiltonian diffeomorphisms of $\B M$. Let $\{\D_\a\}$ be a collection of regions such that
\begin{itemize}
\item For each $\a$ the region $\D_\a\subset \B M$ is diffeomorphic to a ball of the same dimension as $\B M$.

\item All elements in $\{\D_\a\}$ have the same volume $r$. Every region of volume $r$ in $\B M$, which is diffeomorphic to
a $\dim(\B M)$-dimensional ball, lies in $\{\D_\a\}$.
\end{itemize}
Since $\Ham(\B M)$ is a simple group \cite{Ba}, every diffeomorphism in $\Ham(\B M)$ is a composition of finitely many diffeomorphisms, such that each such diffeomorphism is supported in some ball $\D_\a$. The set of such diffeomorphisms is invariant under conjugation. We define the conjugation-invariant {\em fragmentation norm} on the group $\Ham(\B M)$ with respect to the cover $\{\D_\a\}$ by
$$
\|f\|_{\OP{Frag}}:=
\min\left\{m\in \B N\,|\,f=h_1\circ\cdots\circ h_m \right\},
$$
where the support of $h_i$ lies in some $\D_{\a}$.
The associated metric is defined by
${\B d}_{\OP{Frag}}(f,h):=\|fh^{-1}\|_{\OP{Frag}}$. Since the fragmentation norm is conjugation-invariant, the fragmentation metric is bi-invariant.

\begin{rem}
Recently, fragmentation metric received a considerable attention, since, in particular, it is related to the question about the simplicity of the group of compactly supported area-preserving homeomorphisms of an open 2-disc, see e.g. \cite{EPP,LR}.
\end{rem}

\begin{thm*}\label{T:frag-metric}
\textbf{1.} Let $\B\Sigma_g$ be any closed surface of genus $g>1$. Then there exists an
injective homomorphism
$$I\colon\B Z^{2g-2}\hookrightarrow\Ham(\B\Sigma_g)$$
which is bi-Lipschitz with respect to the word metric on $\B Z^{2g-2}$ and the fragmentation metric on
$\Ham(\B\Sigma_g)$.\\
\textbf{2.} Let $(\B M,\o)$ be a closed symplectic manifold of dimension greater then two. Suppose that there exists an embedding of the free group on two generators $\B F_2\hookrightarrow\pi_1(\B M)$ such that the image of the induced map $\widehat{Q}(\pi_1(\B M))\to \widehat{Q}(\B F_2)$ is infinite dimensional. Then for every natural number $m\in \B N$ there exists an injective homomorphism
$$I\colon\B Z^m\hookrightarrow\Ham(\B M)$$
which is bi-Lipschitz with respect to the word metric on $\B Z^m$ and the fragmentation metric on
$\Ham(\B M)$.
\end{thm*}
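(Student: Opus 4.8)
The plan rests on the standard principle, due to Burago--Ivanov--Polterovich \cite{BIP}, that a conjugation-invariant norm is bounded below by any homogeneous quasi-morphism that vanishes on its generating set: if $\psi\in Q(\Ham(\B M))$ satisfies $\psi(h)=0$ for every $h$ supported in some $\D_\a$, then writing $f=h_1\circ\cdots\circ h_m$ with $m=\|f\|_{\Frag}$ gives $|\psi(f)|\le (m-1)D_\psi\le D_\psi\|f\|_{\Frag}$. The first and conceptually central step is therefore to manufacture homogeneous quasi-morphisms on $\Ham$ that vanish on ball-supported maps, and I claim the ones coming from Theorem~\ref{T:gen-GG} in the case $n=1$ -- those built from $\varphi\in Q(\pi_1(\B\Sigma_g))$, respectively $\varphi\in\widehat Q(\pi_1(\B M))$ -- do exactly this. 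Indeed, if $h$ is supported in a ball $\D_\a$, choose a Hamiltonian isotopy $h_t$ supported in $\D_\a$; then the class $\g(h;x)\in\pi_1$ is a locally constant function of $x$, since the defining loop $\g_x$ varies continuously with $x$ while homotopy classes are discrete. For $x$ outside $\D_\a$ the isotopy fixes $x$, so $\g_x$ is a path traversed forward and back and is null-homotopic; by continuity across $\partial\D_\a$ and connectedness of $\D_\a$ the class $\g(h;x)$ is trivial for all $x$, whence $\varphi(\g(h;x))\equiv\varphi(1)=0$ and the quasi-morphism vanishes on $h$.

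With this vanishing lemma in hand, both parts reduce to exhibiting pairwise commuting elements together with a nondegenerate pairing against such quasi-morphisms. For Part~1, I would fix a suitable family of $2g-2$ pairwise disjoint embedded annuli $A_1,\dots,A_{2g-2}$ whose cores $\g_1,\dots,\g_{2g-2}$ are essential non-contractible curves, and let $g_j\in\Ham(\B\Sigma_g)$ be an autonomous map supported in $A_j$ that winds a band of points around $\g_j$. Disjointness of the supports makes the $g_j$ commute, so $k\mapsto\prod_j g_j^{k_j}$ is a homomorphism $T\colon\B Z^{2g-2}\to\Ham(\B\Sigma_g)$. Choosing $\varphi_1,\dots,\varphi_{2g-2}\in Q(\pi_1(\B\Sigma_g))$ with $\det(\varphi_i(\g_j))\neq0$ -- possible because the $\g_j$ are distinct non-trivial classes and $Q(\pi_1(\B\Sigma_g))$ is rich enough to separate them -- the computation $\overline\Phi_{1,\varphi_i}(g_j)=c_j\,\varphi_i(\g_j)$, where $c_j\neq0$ records the swept area, shows that the pairing matrix $\bigl(\overline\Phi_{1,\varphi_i}(g_j)\bigr)$ is invertible. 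The number $2g-2$ is precisely the rank this configuration of disjoint curves can deliver while keeping the pairing nondegenerate.

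Part~2 runs the same argument with the curves replaced by loops realizing the free subgroup. The hypothesis that the image of $\widehat Q(\pi_1(\B M))\to\widehat Q(\B F_2)$ is infinite-dimensional furnishes, for any fixed $m$, words $w_1,\dots,w_m\in\B F_2\subset\pi_1(\B M)$ and quasi-morphisms $\varphi_1,\dots,\varphi_m$ with $\det(\varphi_i(w_j))\neq0$. Here the assumption $\dim\B M>2$ is essential: a generic finite family of loops in a manifold of dimension at least three is disjoint and admits pairwise disjoint tubular neighborhoods, so I can realize $w_1,\dots,w_m$ by commuting point-pushing Hamiltonians $g_1,\dots,g_m$ with disjoint supports, each tube being of solid-torus type (not a ball, so the quasi-morphisms need not vanish on $g_j$), and the pairing matrix is again invertible. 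It is exactly this freedom, absent on a surface, that removes the bound $2g-2$ and yields $\B Z^m$ for every $m$.

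Finally, both embeddings are bi-Lipschitz. The upper bound is immediate from the triangle inequality, $\|T(k)\|_{\Frag}\le\sum_j|k_j|\,\|g_j\|_{\Frag}\le C\|k\|$. For the lower bound, since the $g_j$ commute and the $\overline\Phi$'s are homogeneous they restrict to homomorphisms on the image of $T$, so $\overline\Phi_{1,\varphi_i}(T(k))=\sum_j k_j\,\overline\Phi_{1,\varphi_i}(g_j)$; inverting the nondegenerate pairing and applying the Burago--Ivanov--Polterovich bound gives $\|k\|\le C'\max_i|\overline\Phi_{1,\varphi_i}(T(k))|\le C''\|T(k)\|_{\Frag}$, which in particular forces $T$ to be injective. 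The main obstacle is the vanishing lemma of the first paragraph, together with the well-definedness of the $n=1$ quasi-morphisms on $\Ham(\B M)$ for a general symplectic manifold, where $\Ham$ need not be simply connected and one must work modulo homomorphisms -- matching the appearance of $\widehat Q$ in the hypothesis; the remaining difficulty is combinatorial, namely arranging disjoint supports with a nondegenerate pairing, which is what pins the count to $2g-2$ in the surface case and, via the dimension hypothesis, lifts it to arbitrary $m$ in higher dimensions.
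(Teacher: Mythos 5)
You correctly identify the skeleton of the paper's argument: quasi-morphisms on $\Ham$ induced from $\pi_1$ vanish on ball-supported diffeomorphisms (the paper's Corollary on Polterovich's construction), so a commuting family pairing nondegenerately against such quasi-morphisms embeds bi-Lipschitzly via the Burago--Ivanov--Polterovich estimate; your closing argument also matches the paper's. The fatal gap is in the construction of the commuting family: your pairing formula $\overline{\Phi}_{1,\varphi_i}(g_j)=c_j\,\varphi_i(\g_j)$ is correct, but the constant $c_j$ (the net swept area, i.e.\ the mean winding of trajectories around the core) is forced to vanish for Hamiltonian $g_j$. The general statement is that $\int_{\B M}\alpha(X_H)\,\o^n=0$ for every closed $1$-form $\alpha$ and every Hamiltonian vector field $X_H$. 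Concretely, on a surface, if $g_j$ rotates the circle at radius $x$ of the annulus $A_j$ with speed $v(x)$, then $\iota_X\o=v(x)\,dx$ and the flux of the isotopy is $\bigl(\int v\bigr)\cdot\mathrm{PD}[\g_j]\in H^1(\B\Sigma_g;\B R)$; since $\Diff_0(\B\Sigma_g,\area)$ is simply connected, $\Ham(\B\Sigma_g)=\ker\Flux$, so for a non-separating core either $c_j=0$ or $g_j\notin\Ham(\B\Sigma_g)$. Separating cores escape this (the generating Hamiltonian may take different constants on the two complementary sides), but an Euler-characteristic count shows $\B\Sigma_g$ carries at most $2g-3$ disjoint, pairwise non-isotopic separating curves, and isotopic cores give proportional columns; hence no invertible $(2g-2)\times(2g-2)$ matrix can arise this way. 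Your heuristic that ``$2g-2$ is precisely the rank this configuration of disjoint curves can deliver'' is backwards: disjointness alone allows $3g-3$ curves, and it is the Hamiltonian constraint that caps the usable rank strictly below $2g-2$. Part 2 fails identically, and the dimension hypothesis does not remove the obstruction: a loop in $(\B M^{2n},\o)$, $n\geq2$, is isotropic, so a Weinstein neighbourhood is $S^1\times B^{2n-1}$ with $\o=d\theta\wedge dp_\theta+\o_{\mathrm{std}}$; the complement of such a tube is connected, so the Hamiltonian generating any isotopy supported in the tube can be normalized to be compactly supported in it, the winding rate $\partial H/\partial p_\theta$ integrates to zero over the tube, and since the flow preserves volume the asymptotic winding integrates to zero as well. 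Hence $\overline{\Psi}_i(g_j)=0$ for every ``point-pushing'' $g_j$, and your pairing matrix is identically zero.

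This zero-mean-winding obstruction is exactly what the paper's construction is built to evade, and evading it is the real content of the proof. For each index the paper takes an embedded figure eight with loops $\a_i,\b_i$, two \emph{overlapping} annuli around them carrying flows $h_{t,i},g_{t,i}$ whose time-one maps are the identity on middle sub-annuli $U_{h,i},U_{g,i}$, quasi-morphisms $\phi_i$ with $\phi_i([\a_j])=\phi_i([\b_j])=0$ but $\phi_i([\g_j])=\delta_{ij}$ for a longer word $[\g_j]=w([\a_j],[\b_j])$ in the free group, and sets $f_i:=w(h_{1,i},g_{1,i})$. Because $\phi_i$ kills all powers of the individual cores, whatever winding the flux constraint forces to cancel contributes nothing; the signal comes from the overlap $U_{h,i}\cap U_{g,i}$, where the trajectory class under $f_i^p$ is exactly $[\g_i]^p$, giving $\overline{\Psi}_i(f_i)\approx\area(U_{h,i}\cap U_{g,i})>0$ up to errors controlled by defects and made small by shrinking the outer annular regions. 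In other words, the pairing exploits the failure of $\phi_i$ to be a homomorphism --- zero on the generators, nonzero on the word --- a nonlinearity invisible to any single-loop, net-winding pairing such as yours. The number $2g-2$ is simply the number of disjoint figure-eight embeddings the paper uses, not an optimality statement about disjoint curves, and in Part 2 the same figure-eight construction, now in solid-torus-type neighbourhoods, yields arbitrary rank $m$.
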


It is known that every non-elementary word-hyperbolic group $\Gamma$ contains $\B F_2$ such that the induced map
$\widehat{Q}(\Gamma)\to \widehat{Q}(\B F_2)$ is infinite dimensional, see \cite{HO}.
\begin{cor}
Let $(\B M,\o)$ be a closed negatively curved symplectic manifold of dimension greater then two. Then $(\Ham(\B M), \B d_{\OP{Frag}})$ contains bi-Lipschitz embedded finitely generated free abelian group of an arbitrary rank.
\end{cor}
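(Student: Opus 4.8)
The plan is to deduce the corollary directly from the second assertion of Theorem~\ref{T:frag-metric} by verifying its two hypotheses for the fundamental group $\pi_1(\B M)$. The first thing I would establish is that a closed negatively curved Riemannian manifold has word-hyperbolic fundamental group. By the Cartan--Hadamard theorem the universal cover $\widetilde{\B M}$ is diffeomorphic to $\R^{\dim\B M}$, and since $\B M$ is closed its sectional curvature is pinched between two negative constants; hence $\widetilde{\B M}$ is a Gromov-hyperbolic geodesic metric space. As $\pi_1(\B M)$ acts on $\widetilde{\B M}$ properly discontinuously and cocompactly by isometries, the Milnor--\v{S}varc lemma yields a quasi-isometry $\pi_1(\B M)\simeq\widetilde{\B M}$, so $\pi_1(\B M)$ is word-hyperbolic.

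Next I would check non-elementarity. Because $\dim\B M>2$, the space $\widetilde{\B M}$ is quasi-isometric neither to a point nor to a line, so $\pi_1(\B M)$ is neither finite nor virtually cyclic; thus it is a non-elementary word-hyperbolic group. The result of Hull--Osin quoted just before the corollary (\cite{HO}) then provides an embedding $\B F_2\hookrightarrow\pi_1(\B M)$ for which the induced restriction map $\widehat{Q}(\pi_1(\B M))\to\widehat{Q}(\B F_2)$ has infinite-dimensional image.

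With these two facts in hand the hypotheses of the second part of Theorem~\ref{T:frag-metric} hold verbatim: $\B M$ is a closed symplectic manifold of dimension greater than two, and $\pi_1(\B M)$ contains a copy of $\B F_2$ with the required property on homogeneous quasi-morphisms. Applying that theorem, for every $m\in\B N$ one obtains an injective homomorphism $I\colon\B Z^m\hookrightarrow\Ham(\B M)$ that is bi-Lipschitz with respect to the word metric on $\B Z^m$ and the fragmentation metric $\B d_{\OP{Frag}}$ on $\Ham(\B M)$. Since $m$ is arbitrary, $(\Ham(\B M),\B d_{\OP{Frag}})$ contains bi-Lipschitz embedded finitely generated free abelian groups of every rank, which is exactly the assertion.

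This argument is essentially a packaging of earlier results, so I do not anticipate a serious obstacle; all the analytic and dynamical content is already absorbed into Theorem~\ref{T:frag-metric}. The only step requiring (mild) geometric care is confirming that $\pi_1(\B M)$ is non-elementary hyperbolic, i.e.\ ruling out the finite and virtually cyclic cases, and it is precisely here that the negative-curvature and the dimension hypotheses are used.
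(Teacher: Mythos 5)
Your proof is correct and follows exactly the route the paper intends: the paper's (implicit) proof is precisely the remark preceding the corollary --- that non-elementary word-hyperbolic groups admit an $\B F_2$ with infinite-dimensional image of the restriction map $\widehat{Q}(\pi_1(\B M))\to\widehat{Q}(\B F_2)$ (Hull--Osin) --- combined with part 2 of Theorem \ref{T:frag-metric}. Your only addition is spelling out, via Cartan--Hadamard and Milnor--\v{S}varc, why $\pi_1(\B M)$ is non-elementary word-hyperbolic, which the paper takes as standard.
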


\subsubsection{Calabi quasi-morphisms on $\Ham(\B\Sigma_g)$}
\label{subsec-Calabi-qm}

Let $(\B M,\o)$ be a symplectic manifold. Recall that a homogeneous quasi-morphism $\Psi$ on $\Ham(\B M)$ is called Calabi if its restriction to a subgroup $\Ham(\B B)$, where $\B B$ is a displaceable ball in $\B M$, coincides with Calabi homomorphism on $\Ham(\B B)$, see e.g. \cite{EP,BEP,Ost}.

Here we focus on the case of surfaces, but the works quoted above deal also with groups of Hamiltonian
diffeomorphisms of higher dimensional symplectic manifolds. In \cite{EP} Entov and Polterovich constructed a Calabi quasi-morphism on a group of Hamiltonian diffeomorphisms of the two-sphere. Their construction uses quantum homology. They asked whether one can construct a Calabi quasi-morphism on $\Ham(\B \Sigma_g)$, where $g\geq 1$. Around 2006 Pierre Py gave a positive answer to this question, see \cite{Py,Py1}.

Let $g>1$. In this paper we give a different construction of an infinite family of Calabi quasi-morphisms on
$\Ham(\B \Sigma_g)$.

Let $\D$ be an open disc in the Euclidean plane and denote by $\Ham(\D)$ the group of compactly supported Hamiltonian diffeomorphisms of $\D$.
Let $\varphi\colon \B B_2\to \B R$ be a homomorphism, which takes value $1$ on the generator of the braid group $\B B_2$ on two generators, which is cyclic. Let $\overline{\Phi}_2$ be the induced, by Gambaudo-Ghys construction, homomorphism from $\Ham(\D)$ to the reals. The Calabi homomorphism
$\C_{\D}\colon\Ham(\D)\to\B R$ may be defined as follows:
\begin{equation}\label{eq:Calabi-disc}
\C_{\D}(h):=\overline{\Phi}_2(h).
\end{equation}
For the proof of this fact see e.g. \cite{GG1}. It turns out that the generalized Gambaudo-Ghys construction allows us to construct an infinite family of linearly independent Calabi quasi-morphisms, i.e. we prove the following

\begin{thm*}\label{T:Calabi-qm}
Let $g>1$. The space of Calabi quasi-morphisms on $\Ham(\B\Sigma_g)$ is infinite dimensional.
\end{thm*}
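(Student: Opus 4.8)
The plan is to realize the Calabi quasi-morphisms as an affine subspace of $Q(\Ham(\B\Sigma_g))$ and to show that the associated linear space is infinite dimensional. Call a homogeneous quasi-morphism $\rho$ on $\Ham(\B\Sigma_g)$ \emph{disc-trivial} if it vanishes on $\Ham(\D)$ for every displaceable disc $\D\subset\B\Sigma_g$, and let $V$ denote the space of all disc-trivial homogeneous quasi-morphisms. If $\Psi_0$ is any single Calabi quasi-morphism, then a homogeneous quasi-morphism $\Psi$ is Calabi if and only if $\Psi-\Psi_0$ restricts to $\C_\D-\C_\D=0$ on each $\Ham(\D)$; hence the set of Calabi quasi-morphisms is exactly the affine space $\Psi_0+V$. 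Since any finite linear combination of elements of $V$ is disc-trivial while $\Psi_0$ is not, the set $\{\Psi_0\}\cup S$ is linearly independent whenever $S\subset V$ is, and a short computation then shows that $\{\Psi_0+\rho:\rho\in S\}$ is linearly independent as well. Thus the theorem reduces to two tasks: exhibit one Calabi quasi-morphism $\Psi_0$, and produce an infinite linearly independent family inside $V$.

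For the first task I would run the generalized Gambaudo--Ghys construction of Theorem \ref{T:gen-GG} with $n=2$, starting from a homogeneous quasi-morphism $\varphi_0$ on $\B B_2(\B\Sigma_g)$ whose restriction to the disc braid subgroup $\B B_2(\D)\cong\B Z$ sends the positive half-twist to $1$, and set $\Psi_0:=\overline{\Phi}_2$. The Calabi property is checked locally: for $h\in\Ham(\D)$ with $\D$ displaceable and a configuration $(x_1,x_2)$, the braid $\g(h;(x_1,x_2))$ is nontrivial only when both points lie in $\D$, in which case it is the image under $\B B_2(\D)\hookrightarrow\B B_2(\B\Sigma_g)$ of the $2$-braid traced inside $\D$. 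Because $\varphi_0$ restricts to the standard abelianization on $\B B_2(\D)$, the integral defining $\overline{\Phi}_2(h)$ reduces to the intrinsic disc integral, which by \eqref{eq:Calabi-disc} is exactly $\C_\D(h)$; the configurations with at most one point in $\D$ contribute nothing after homogenization. This identifies $\Psi_0|_{\Ham(\D)}$ with $\C_\D$ for every displaceable disc, so $\Psi_0$ is Calabi.

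For the second task I would take homogeneous quasi-morphisms $\psi$ on $\B B_n(\B\Sigma_g)$ that are insensitive to local braiding, i.e.\ that factor through the data recording only the $\B\Sigma_g$-homotopy classes of the strands and forget the braiding inside a disc. Since $g>1$ the group $\pi_1(\B\Sigma_g)$ is non-elementary word-hyperbolic and therefore carries an infinite-dimensional space of homogeneous quasi-morphisms; transporting these yields an infinite-dimensional, linearly independent family in $Q(\B B_n(\B\Sigma_g))$. By the injectivity of $\G_{n,g}$ established in Theorem \ref{T:prop-GG-map}, their images $\rho_\psi:=\overline{\Phi}_n$ are linearly independent in $Q(\Diff_0(\B\Sigma_g,\area))$, hence in $Q(\Ham(\B\Sigma_g))$. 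Each $\rho_\psi$ is disc-trivial: for $h$ supported in a disc the braid $\g(h;x)$ lies in the image of a disc braid group, on which a quasi-morphism forgetting local braiding vanishes (strands braiding inside a disc are null-homotopic in $\B\Sigma_g$), so the integrand of $\Phi_n(h^p)$ stays bounded and $\rho_\psi(h)=0$. Thus the $\rho_\psi$ form an infinite linearly independent family in $V$, and $\{\Psi_0+\rho_\psi\}$ is the desired infinite linearly independent family of Calabi quasi-morphisms.

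The main obstacle is the local computation underlying both tasks: establishing that, for a displaceable disc $\D$, the restriction of a Gambaudo--Ghys quasi-morphism to $\Ham(\D)$ is governed entirely by the disc braid subgroup. This requires controlling the contribution of configuration points lying outside $\D$, handling the geodesic tails $s_{zy}$ that join the basepoint to the moving points, and verifying that the reduction to the intrinsic disc integral is exact rather than merely up to a bounded error after homogenization. Getting the normalization in the Calabi case precisely right --- so that the restriction equals $\C_\D$ and not just a multiple of it --- is the most delicate point, and it is where the disc identity \eqref{eq:Calabi-disc} is essential. A secondary technical point is verifying that the chosen quasi-morphisms $\psi$ genuinely vanish on the disc braid subgroups for the full, as opposed to pure, braid group, which is exactly what makes the $\rho_\psi$ disc-trivial.
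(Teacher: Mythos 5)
Your overall architecture is sound and genuinely different from the paper's: you exhibit one Calabi quasi-morphism $\Psi_0$ and then translate it by an infinite-dimensional space $V$ of disc-trivial quasi-morphisms, whereas the paper produces an infinite linearly independent family $\textbf{QCal}_g\subset Q(\B B_2(\B\Sigma_g))$ of quasi-morphisms, each non-trivial on the disc braid subgroup $\B B_2$, normalizes each on the generator, and pushes all of them through $\G_{2,g}$, so that every member of the resulting family is itself Calabi. Your second task is essentially available in the paper's toolbox: for $n=1$ the construction is Polterovich's, $\G_{1,g}$ is injective by Theorem \ref{T:prop-GG-map}, $Q(\pi_1(\B\Sigma_g))$ is infinite dimensional since $g>1$, and Corollary \ref{cor:Polt-qm} states precisely that these quasi-morphisms vanish on diffeomorphisms supported in a ball. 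Using $n=1$ would also spare you the vaguely defined ``transport'' of quasi-morphisms from $\pi_1(\B\Sigma_g)$ to the full (non-pure) braid group $\B B_n(\B\Sigma_g)$: there the strands are permuted, so ``recording the homotopy classes of the strands'' is not a homomorphism, and making this step precise is an extra chore your sketch does not address.

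The genuine gap is in your first task: you postulate a homogeneous quasi-morphism $\varphi_0$ on $\B B_2(\B\Sigma_g)$ whose restriction to $\B B_2(\D)\cong\B Z$ sends the positive half-twist to $1$, with no argument for its existence. This existence is the crux of the whole theorem, not a routine input. Every homomorphism $\B B_2(\B\Sigma_g)\to\B R$ vanishes on $\B B_2$ (this follows from Bellingeri's presentation \cite{Bel}, and the paper invokes exactly this fact), so $\varphi_0$ cannot come from abelianization; and a quasi-morphism defined on a cyclic subgroup does not in general extend to the ambient group. The paper obtains $\varphi_0$ --- indeed infinitely many such quasi-morphisms --- from a deep result of Bestvina--Bromberg--Fujiwara \cite[Theorem 4.2]{BBF}: since no positive power of the Dehn twist about a simple closed curve bounding a twice-punctured disc is conjugate in $\mathcal{MCG}_g^2$ to its inverse, there is a homogeneous quasi-morphism on $\mathcal{MCG}_g^2$ unbounded on the powers of that twist, hence non-trivial on $\B B_2<\B B_2(\B\Sigma_g)<\mathcal{MCG}_g^2$; combining this with the argument of \cite{BF} yields the infinite family $\textbf{QCal}_g$. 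Notably, you flag as ``most delicate'' the local computation identifying $\Psi_0|_{\Ham(\D)}$ with $\C_{\D}$ and its normalization --- which are handled exactly as in \eqref{eq:Calabi-disc} and the Gambaudo--Ghys disc computation --- while the step you take for granted is the one requiring the real theorem. Until the existence of $\varphi_0$ is established, your argument does not produce even a single Calabi quasi-morphism.
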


\begin{rem}
We should note that the above theorem is not entirely new, because Py construction \cite{Py} implies a proof of this theorem as well. Nevertheless, we think that this theorem is interesting, since our construction is entirely different from the one given by Pierre Py.
\end{rem}

\section{Proofs}
\label{sec-proofs}

\subsection{Proof of Theorem \ref{T:gen-GG}}

It is enough to prove that the functions
$$\Phi_n\colon \Diff_0(\B\Sigma_g,\area)\to \B R\qquad \Phi_n\colon \Ham(\B\Sigma_1)\to \B R$$
are well defined quasi-morphisms for each $n>1$.

Let $g>1$. In \cite{Cal} E. Calabi constructed a homomorphism
$$\C_g\colon\Diff_0(\B\Sigma_g)\to H_1(\B\Sigma_g,\B R)\cong\B R^{2g}.$$
Banyaga proved that the kernel of $\C_g$ is a simple group, see \cite{Ba}. Let us recall a useful definition of the Calabi homomorphism.
For each $1\leq i\leq n$ the fundamental group $\pi_1(\B\Sigma_g)$ is generated by homotopy classes of $2g$ simple closed curves such that
\begin{equation}\label{eq:Pi-1-presentation}
\pi_1(\B\Sigma_g)=\langle\a_i,\b_i|\hspace{2mm} 1\leq i\leq g,\thinspace \prod_{i=1}^g [\a_i,\b_i]=1\rangle .
\end{equation}
Let $\Pi_{\a_i}\colon\pi_1(\B\Sigma_g)\to\B Z$ and $\Pi_{\b_i}\colon\pi_1(\B\Sigma_g)\to\B Z$ be homomorphisms, where $\Pi_{\a_i}(w)$ equals to the number of times $\a_i$ appears in $w$ minus number of times $\a_i^{-1}$ appears in $w$, and $\Pi_{\b_i}(w)$ equals to the number of times $\b_i$ appears in $w$ minus number of times $\b_i^{-1}$ appears in $w$. Then
$$\C_g\colon\Diff_0(\B\Sigma_g,\area)\to H_1(\B\Sigma_g,\B R)\cong\B R^{2g}$$
may be defined as follows:
$$
\C_g(h)_i:=
\begin{cases}
\lim\limits_{p\to\infty}\frac{1}{p}\int\limits_{\B\Sigma_g}\Pi_{\a_i}(\g(h^p;{x}))d{x}\text{ for } 1\leq i\leq g\\
\lim\limits_{p\to\infty}\frac{1}{p}\int\limits_{\B\Sigma_g}\Pi_{\b_{i-g}}(\g(h^p;{x}))d{x}\text{ for } g+1\leq i\leq 2g,
\end{cases}
$$
where $\g(h^p;{x})$ is defined in ~\eqref{eq:gamma-for-qm}, and $\C_g(h)_i$ is the $i$-th coordinate of $\C_g(h)$.

Let $f\in\Diff_0(\B\Sigma_g,\area)$. It follows that $f$ can be written as a product $f=f'\circ h_1\circ\ldots\circ h_{2g}$, where $f'\in\ker(\C_g)$ and each $h_i$ is a time-one map of the isotopy $\{h_{t,i}\}\in\Diff_0(\B\Sigma_g,\area)$ which is defined as follows:

\begin{itemize}
\item
For each $1\leq i\leq g$ and $t\in \B R$ the diffeomorphism $h_{t,i}$ equals to the identity
outside some tubular neighborhood of the curve $\a_i$, that is identified with $[0,1]\times\B S^1$, and
the time-one map $h_i$ is equal to the identity on $\a_i$, which is identified with $\left\{\frac{1}{2}\right\}\times\B S^1$.
\item
For each $g+1\leq i\leq 2g$ and $t\in \B R$ the diffeomorphism $h_{t,i}$ equals to the identity
outside some tubular neighborhood of the curve $\b_{i-g}$, that is identified with $[0,1]\times\B S^1$, and
the time-one map $h_i$ is equal to the identity on $\b_{i-g}$, which is identified with $\left\{\frac{1}{2}\right\}\times\B S^1$.
\item
Each diffeomorphism $h_{t,i}$ preserves the foliation of
$[0,1]\times\B S^1$ by the circles $\{x\}\times \B S^1$. Each diffeomorphism $h_{t,i}$ preserves the orientation for $t\geq 0$.
For all $s,t\in\B R$ we have $h_{i,t+s}=h_{i,t}\circ h_{i,s}$.
\item
In addition we require that $\C_g(f)=\C_g(h_1\circ\ldots\circ h_{2g})$.
\end{itemize}

It was proved by Banyaga that there exists $m\in\B N$ and a family $\{f_i\}_{i=1}^m$ of diffeomorphisms in the group $\Diff_0(\B\Sigma_g,\area)$, such that each $f_i$ is supported in some disc $\B D_i\subset\B\Sigma_g$ and $f'=f_1\circ\ldots\circ f_m$, see \cite{Ba1}. For each
$1\leq i\leq m$ pick an isotopy $\{f_{t,i}\}$ in $\Diff_0(\B\Sigma_g,\area)$ between the identity and $f_i$, such that the support of $\{f_{t,i}\}$ lies in the same disc $\B D_i$. First we are going to show that $|\Phi_n(f_i)|<\infty$.

Let $\X_n(\B D_i)$ be the configuration space of all ordered $n$-tuples
of pairwise distinct points in the subsurface $\B D_i\subset\B\Sigma_g$.
Since changing a basepoint in $\X_n(\B\Sigma_g)$ changes $\Phi_n$ by a bounded value, we can assume that the basepoint $z$ lies in $\X_n(\B D_i)$. Theorems 2 and 4 in \cite{Bir} imply that the inclusion of $\B D_i$ into $\B\Sigma_g$ induces an inclusion of the Artin braid group $\B B_n$ into the surface braid group $\B B_n(\B\Sigma_g)$. For $1\leq j<n$ the group $\B B_j$ may be viewed as a subgroup of $\B B_n$ by adding $n-j$ strings. Since $\varphi\colon \B B_n(\B\Sigma_g)\to \B R$ is invariant under conjugation and the support of the isotopy $\{f_{t,i}\}$ lies in $\B D_i$ we have
\begin{eqnarray*}
\Phi_n(f_i)&=&\int\limits_{\X_n(\B\Sigma_g)}\varphi(\g(f_i;{x}))d{x}\\
&=&\sum_{j=1}^n\dbinom{n}{j}\vol(\X_{n-j}(\B\Sigma_g\setminus \B D_i))\int\limits_{\X_j(\B D_i)}\varphi(\g(f_i;{x}))d{x}.
\end{eqnarray*}
Since the integral $\int\limits_{\X_j(\B D_i)}\varphi(\g(f_i;{x}))d{x}$ is well defined for each $1\leq j\leq n$ \cite[Lemma 4.1]{B}, we have $|\Phi_n(f_i)|<\infty$.

Denote by $\mathfrak{A}_i$ the tubular neighborhood of $\a_i$ diffeomorphic to $[0,1]\times\B S^1$ if $1\leq i\leq g$ and of $\b_{i-g}$ if $g+1\leq i\leq 2g$. By definition
\begin{equation*}
\Phi_n(h_i):=\int\limits_{\X_n(\B\Sigma_g)}\varphi(\g(h_i;{x}))d{x}\thinspace .
\end{equation*}
Since $h_i$ rotates all circles of $\mathfrak{A}_i$ and is identity on $\B\Sigma_g\setminus\mathfrak{A}_i$, it is easy to compute $\Phi_n(h_i)$. The computation is very similar to one done by Gambaudo-Ghys in \cite[Section 5.2]{GG}, and hence we omit it. In particular, it follows that $|\Phi_n(h_i)|<\infty$.

Let $g',h'\in \Diff_0(\B\Sigma_g,\area)$. Then
\begin{eqnarray*}
&&|\Phi_n(g'h')-\Phi_n(g')-\Phi_n(h')|\\
&\leq&\int\limits_{\X_n(\B\Sigma_g)}|\varphi(\g(g'h';{x}))-\varphi(\g(g';h'(x)))-\varphi(\g(h';{x}))|d{x}\\
&\leq&\vol(\X_n(\B\Sigma_g))\cdot D_{\varphi}\thinspace,
\end{eqnarray*}
i.e. $\Phi_n$ satisfies the quasi-morphism condition. It follows that
$$|\Phi_n(f)|\leq (m+2g)\left(D_{\varphi}\cdot\vol(\X_n(\B\Sigma_g))+\sum_{i=1}^m|\Phi_n(f_i)|+\sum_{i=1}^{2g}|\Phi_n(h_i)|\right).$$
Hence $\Phi_n$ is a well defined quasi-morphism.

Let $g=1$ and $f\in\Ham(\B\Sigma_1)$. Since the group $\Ham(\B\Sigma_1)$ is simple, there exists $m\in\B N$ and a family $\{f_i\}_{i=1}^m$ of diffeomorphisms in $\Ham(\B\Sigma_1)$, such that each $f_i$ is supported in some disc $\B D_i\subset\B\Sigma_1$ and $f=f_1\circ\ldots\circ f_m$, see \cite{Ba1}. For each
$1\leq i\leq m$ we pick an isotopy $\{f_{t,i}\}$ in $\Ham(\B\Sigma_1)$ between the identity and $f_i$, such that the support of $\{f_{t,i}\}$ lies in the same disc $\B D_i$. Now we proceed as in the case when $g>1$.
\qed

\subsection{Proof of Theorem \ref{T:prop-GG-map}}
\label{ssec-proof-GG-properties}

Let $g>1$. Recall that the group $\pi_1(\B\Sigma_g, z_i)$ is generated by homotopy classes of $2g$ simple closed curves such that
\begin{equation}\label{eq:Pi-1-i-presentation}
\pi_1(\B\Sigma_g, z_i)=\langle\a_{ij},\b_{ij}|\hspace{2mm} 1\leq j\leq g,\thinspace \prod_{j=1}^g [\a_{ij},\b_{ij}]=1\rangle .
\end{equation}
At this point we describe a particular generating set $\mathcal{S}$ for the pure surface braid group $\B P_n(\B \Sigma_g)$. In \cite{Bel} Bellingeri showed that $\B P_n(\B \Sigma_g)$ is generated by the set
$$\mathcal{S}=\{A_{i,j}\thinspace|\hspace{2mm} 1\leq i\leq 2g+n-1,\thinspace 2g+1\leq j\leq 2g+n,\thinspace i<j\}.$$
We view the Artin pure braid group $\B P_n$ as a subgroup of $\B P_n(\B \Sigma_g)$. Each generator in the set
$$\mathcal{S}'=\{A_{i,j}\thinspace|\thinspace 2g+1\leq i<j\leq 2g+n\}$$
is the corresponding band generator of $\B P_n$, i.e. it twists only $i$-th and $j$-th strands. Each generator in the set $\mathcal{S}\setminus\mathcal{S}'$ naturally corresponds to one of the generators of $\pi_1(\B\Sigma_g, z_i)$ described above, see \cite{Bel}.

Let us start the proof.
In case when $g=0$ this theorem was proved by Ishida in \cite{I}. In what follows we adopt the proof of Ishida to the general case of $g>1$. Let
$0\neq\phi\in Q(\B B_n(\B\Sigma_g))$. It is sufficient to prove that $\G_{n,g}(\phi)\neq 0$. Since $\phi$ is a non-trivial homogeneous quasi-morphism, there exists a braid $\b\in \B P_n(\B\Sigma_g)$ such that $\phi(\b)\neq 0$. The braid $\b$ is pure, hence it can be written as a composition of generators and their inverses from the set $\mathcal{S}$.

Let $\{U_i\}_{i=1}^n$ be a family of disjoint subsets of $\B\Sigma_g$, such that $z_i\in U_i$ and each $U_i$ is diffeomorphic to a disc. For any pair $(U_i,U_j)$  we take subsets $W\subset V\subset\B\Sigma_g$ such that $W$ and $V$ are diffeomorphic to a disc, $U_i\cup U_j\subset W$ and $V\cap U_k=\emptyset$ for $k\neq i,j$. Let $\{h_t\}$ be a path in $\Diff_0(\B\Sigma_g,\area)$ which fixes the outside and a small neighborhood of $V$ and rotates $W$ once. Such $\{h_t\}$ twists all $U_i$'s in the form of a generator $A_{i+2g,j+2g}\in\mathcal{S}'$.

In addition, for each $U_i$
we take $2g$ subsets $W_{i,j}\subset V_{i,j}\subset\B\Sigma_g$ such that both $W_{i,j}$ and $V_{i,j}$ are diffeomorphic to $\B S^1\times[0,1]$ and $U_i\subset W_{i,j}$ for each $1\leq j\leq 2g$. We identify $W_{i,j}$ with $\B S^1\times[0,1]$, and require that for each $1\leq j\leq 2g$ the curve $\B S^1\times\{\frac{1}{2}\}$ in $W_{i,j}$ represents a unique generator from the presentation of $\pi_1(\B\Sigma_g, z_i)$ shown in \eqref{eq:Pi-1-i-presentation}. Let $\{h'_t\}$ be a path in $\Diff_0(\B\Sigma_g,\area)$ which fixes the outside and a small neighborhood of $V_{i,j}$, rotates $V_{i,j}$ in the same direction, and rotates $W_{i,j}$ exactly once. Such $\{h'_t\}$ twists all $U_i$'s in the form of a generator $A_{j,i}\in\mathcal{S}\setminus\mathcal{S}'$.

We compose all the paths $\{h_t\}$'s and $\{h'_t\}$'s such that the time-one map of the composition flow $\{f_t\}\in \Diff_0(\B\Sigma_g,\area)$ twists all $U_i$'s in the form of pure braid $\b$. Set $f:=f_1$, then $f$ is identity on each $U_i$ and $\g(f;(x_1,\ldots,x_n))=\b$ for $x_i\in U_i$. If we denote by $U=U_1\cup\ldots\cup U_n$, then

\begin{equation}\label{eq:2-term}
\G_{n,g}(\phi)(f)=\int\limits_{x\in\X_n(U)}\varphi(\g(f;{x}))d{x}+
\int\limits_{x\notin\X_n(U)}\lim_{p\to\infty}\frac{\varphi(\g(f^p;{x}))}{p}\thinspace d{x}\thinspace .
\end{equation}

Let us start with the second term in the above equation. We denote by $\mathcal{A}:=\{\s_i\}_{i=1}^{n-1}$ to be the set of Artin generators for the Artin braid group $\B B_n$. Hence the group $\B B_n(\B\Sigma_g)$ is generated by the set $\mathbf{S}:=\mathcal{A}\cup(\mathcal{S}\setminus\mathcal{S}')$, see \cite{Bel}. For $x\in \X_n(\B\Sigma_g)$ denote by $cr(f^p;x)$ the length of the word in generators from $\mathbf{S}$, which represent the braid $\g(f^p;x)$ and is given by  $p$ concatenations of the flow $\{f_t\}$. Let
$cr(\b):=cr(f;z)$. It follows from the construction of $f$ that there exists a constant $C_n>0$, such that for each $x\in \X_n(\B\Sigma_g)$ we have
$$\lim_{p\to +\infty}\frac{cr(\g(f^p;x))}{p}\leq C_n\cdot cr(\b)\thinspace .$$
For each $\g\in\B B_n(\B\Sigma_g)$ denote by $l(\g)$ the word length of $\g$ with respect to the generating set $\mathbf{S}$. Since $\varphi$ is a homogenous quasi-morphism, we obtain
$$|\varphi(\g)|\leq \left(D_{\varphi}+\max_{s\in\mathbf{S}}|\varphi(s)|\right)\cdot l(\g)\thinspace .$$
It follows that for each $x\in \X_n(\B\Sigma_g)$ we have
\begin{eqnarray*}
\lim\limits_{p\to +\infty}\frac{|\varphi(\g(f^p;x))|}{p}&\leq&\left(D_{\varphi}+\max_{s\in\mathbf{S}}|\varphi(s)|\right)
\lim\limits_{p\to +\infty}\frac{|l(\g(f^p;x))|}{p}\\
&\leq& \left(D_{\varphi}+\max_{s\in\mathbf{S}}|\varphi(s)|\right)\lim\limits_{p\to +\infty}\frac{cr(\g(f^p;x))}{p}\\
&\leq& \left(D_{\varphi}+\max_{s\in\mathbf{S}}|\varphi(s)|\right)\cdot C_n\cdot cr(\b)\thinspace .
\end{eqnarray*}
It follows that
$$
\int\limits_{x\notin\X_n(U)}\lim_{p\to\infty}\frac{\varphi(\g(f^p;{x}))}{p}
\thinspace d{x}\xrightarrow[\area(U)\rightarrow\area(\B\Sigma_g)]{}0\thinspace .
$$

Now we turn to the first term in \eqref{eq:2-term}. There exists a constant $K>0$ such that in every neighborhood of $\area(\B\Sigma_g)$ there exists a number $\area(U)$, where $U=U_1\cup\ldots\cup U_n$ and $U_i$'s as before, such that
$$
\left|\thinspace\int\limits_{x\in\X_n(U)}\varphi(\g(f;{x}))d{x}\right|\geq K\thinspace .
$$
The proof of this fact is identical to the proof of \cite[Theorem 1.2]{I}. This concludes the proof of the theorem.
\qed

\begin{rem}\label{R:genuine-quasi-morphisms}
Let $\varphi$ be a genuine homogeneous quasi-morphism in $Q(\B B_n(\B\Sigma_g))$, i.e. $\varphi$ is not a homomorphism. Then the induced quasi-morphism $\G_{n,g}(\varphi)\in Q(\Diff_0(\B\Sigma_g,\area))$ is not a homomorphism. The proof of this fact is very similar to the proof of Theorem \ref{T:prop-GG-map}, and is omitted.
\end{rem}

Recall that for a group $\Gamma$ we denoted by $\widehat{Q}(\Gamma)$ the space of homogeneous quasi-morphisms on $\Gamma$ modulo the space of homomorphisms on $\Gamma$.

\begin{cor}\label{cor:inf-dim}
The vector space $\widehat{Q}(\Diff_0(\B\Sigma_g,\area))$ is infinite dimensional for $g>1$. Moreover, for $g>1$, the map
$$\G_{n,g}\colon \widehat{Q}(\B B_n(\B\Sigma_g))\to \widehat{Q}(\Diff_0(\B\Sigma_g,\area))\subset\widehat{Q}(\Ham(\B\Sigma_g))$$
is injective and hence the vector space $\widehat{Q}(\Ham(\B\Sigma_g))$ is also infinite dimensional.
\end{cor}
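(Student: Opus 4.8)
The plan is to obtain both assertions from Theorem~\ref{T:prop-GG-map}, Remark~\ref{R:genuine-quasi-morphisms}, and the infinite-dimensionality of $\widehat{Q}(\B B_n(\B\Sigma_g))$, and then to transport the conclusion from $\Diff_0(\B\Sigma_g,\area)$ to $\Ham(\B\Sigma_g)$ by a restriction argument. First I would record that the source is infinite-dimensional: for $n=1$ one has $\B B_1(\B\Sigma_g)=\pi_1(\B\Sigma_g)$, which is non-elementary word-hyperbolic for $g>1$, so $\widehat{Q}(\pi_1(\B\Sigma_g))$ is infinite-dimensional by \cite{HO}. This single value of $n$ already suffices for the dimension counts below, while the same holds for every $n$ since $\B B_n(\B\Sigma_g)$ retains enough hyperbolic-type structure.

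Next I would verify that $\G_{n,g}$ descends to the quotient spaces. If $\varphi\colon\B B_n(\B\Sigma_g)\to\B R$ is a homomorphism, then $D_\varphi=0$ and the cocycle identity $\g(fh;x)=\g(f;h(x))\,\g(h;x)$ in $\B P_n(\B\Sigma_g)$ gives $\varphi(\g(fh;x))=\varphi(\g(f;h(x)))+\varphi(\g(h;x))$ pointwise; integrating and using that $y=h(x)$ is a measure-preserving change of variables on $\X_n(\B\Sigma_g)$ yields $\Phi_n(fh)=\Phi_n(f)+\Phi_n(h)$, so $\G_{n,g}(\varphi)$ is a homomorphism. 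This is precisely the mechanism by which the Calabi homomorphism $\C_g$ arises from $\Pi_{\a_i},\Pi_{\b_i}$. Hence $\G_{n,g}$ carries homomorphisms to homomorphisms and induces a well-defined linear map $\widehat{Q}(\B B_n(\B\Sigma_g))\to\widehat{Q}(\Diff_0(\B\Sigma_g,\area))$; by the contrapositive of Remark~\ref{R:genuine-quasi-morphisms} a class mapping to $0$ is represented by a homomorphism, so this induced map is injective. Together with the first paragraph this proves that $\widehat{Q}(\Diff_0(\B\Sigma_g,\area))$ is infinite-dimensional.

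For the passage to $\Ham(\B\Sigma_g)$ I would prove that the restriction $\widehat{Q}(\Diff_0(\B\Sigma_g,\area))\to\widehat{Q}(\Ham(\B\Sigma_g))$ is injective. Suppose $\psi\in Q(\Diff_0(\B\Sigma_g,\area))$ restricts to a homomorphism on $\Ham(\B\Sigma_g)$; since this group is simple \cite{Ba}, hence perfect, the restriction vanishes. As $\Ham(\B\Sigma_g)$ is normal in $\Diff_0(\B\Sigma_g,\area)$ with abelian quotient, for any $f,h$ the element $n_p:=(fh)^p(f^ph^p)^{-1}$ lies in $\Ham(\B\Sigma_g)$, so $\psi(n_p)=0$; applying the quasi-morphism inequality twice gives $|\psi((fh)^p)-\psi(f^p)-\psi(h^p)|\le 2D_\psi$, and dividing by $p$ and letting $p\to\infty$ forces $\psi(fh)=\psi(f)+\psi(h)$. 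Thus $\psi$ is a homomorphism, the restriction is injective on $\widehat{Q}$, and the inclusion $\widehat{Q}(\Diff_0(\B\Sigma_g,\area))\subset\widehat{Q}(\Ham(\B\Sigma_g))$ is justified. Composing with the injection of the previous paragraph yields injectivity of $\G_{n,g}\colon\widehat{Q}(\B B_n(\B\Sigma_g))\to\widehat{Q}(\Ham(\B\Sigma_g))$ and the infinite-dimensionality of $\widehat{Q}(\Ham(\B\Sigma_g))$.

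The steps that genuinely use structure rather than bookkeeping are the descent and the restriction. The point to check carefully in the descent is the exactness of the cocycle identity on $\B P_n(\B\Sigma_g)$, namely that the auxiliary geodesic arcs cancel so that a homomorphism incurs no defect; and I expect the restriction step, which rests on the simplicity of $\Ham(\B\Sigma_g)$ together with the commutativity of the flux quotient, to be the main conceptual ingredient. Everything else is an immediate consequence of Theorem~\ref{T:prop-GG-map} and Remark~\ref{R:genuine-quasi-morphisms}.
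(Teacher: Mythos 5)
Your proof is correct, and its skeleton is the same as the paper's: injectivity on $\widehat{Q}$ comes from Theorem~\ref{T:prop-GG-map} together with Remark~\ref{R:genuine-quasi-morphisms}, and the passage to $\Ham(\B\Sigma_g)$ uses that $\Ham(\B\Sigma_g)$ is simple (hence perfect) and equals the commutator subgroup of $\Diff_0(\B\Sigma_g,\area)$ with abelian quotient. Two points of comparison are worth recording. First, for the infinite-dimensionality of the source you take $n=1$ and invoke word-hyperbolicity of $\pi_1(\B\Sigma_g)$ via \cite{HO}; the paper instead applies \cite[Theorem 12]{BF} to $\B B_n(\B\Sigma_g)$ viewed as a non-virtually-abelian subgroup of the mapping class group $\mathcal{MCG}_g^n$, which works uniformly in $n$. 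Your route is legitimate since one value of $n$ suffices for the dimension count, but your aside that ``the same holds for every $n$ since $\B B_n(\B\Sigma_g)$ retains enough hyperbolic-type structure'' is not right as stated: for $n\geq 2$ these groups contain $\B Z^2$ and are not word-hyperbolic, so \cite{HO} does not apply to them directly; the Bestvina--Fujiwara argument for subgroups of mapping class groups is precisely what replaces it. Second, you make explicit two steps the paper compresses into single sentences: the verification that $\G_{n,g}$ carries homomorphisms to homomorphisms (via the cocycle identity $\g(fh;x)=\g(f;h(x))\g(h;x)$ and the measure-preserving change of variables), so that the map on $\widehat{Q}$ is well defined, and the argument that a homogeneous quasi-morphism on $\Diff_0(\B\Sigma_g,\area)$ vanishing on $\Ham(\B\Sigma_g)$ is a homomorphism, via $\psi\bigl((fh)^p(f^ph^p)^{-1}\bigr)=0$, the quasi-morphism inequality, and homogeneity. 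Both elaborations are accurate and are exactly the content behind the paper's terse assertions, so your write-up can be read as a fleshed-out version of the paper's proof with a slightly different (and for general $n$, weaker) source of infinite-dimensionality.
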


\begin{proof}
It is a well known fact that for each $g>1$ the braid group $\B B_n(\B\Sigma_g)$ is a subgroup of the mapping class group of the surface $\B\Sigma_g$ with $n$ punctures, see \cite{Bir1}. Since the group $\B B_n(\B\Sigma_g)$ is not virtually abelian, the space $\widehat{Q}(\B B_n(\B\Sigma_g))$ is infinite dimensional, see \cite[Theorem 12]{BF}. It follows from Theorem \ref{T:prop-GG-map} and Remark \ref{R:genuine-quasi-morphisms} that the space $\widehat{Q}(\Diff_0(\B\Sigma_g,\area))$ is also infinite dimensional.

It is known that, for $g>1$, the group $\Ham(\B\Sigma_g)$ is simple and it coincides with the kernel of Calabi homomorphism and with the commutator subgroup of $\Diff_0(\B\Sigma_g,\area)$, see \cite{Ba1}. It follows that there are no non-trivial quasi-morphisms in $\widehat{Q}(\Diff_0(\B\Sigma_g,\area))$ which are homomorphisms when restricted on $\Ham(\B\Sigma_g)$. Hence the map $\G_{n,g}$ is injective and its image in $\widehat{Q}(\Ham(\B\Sigma_g))$ is infinite dimensional.
\end{proof}

\begin{rem}
The above proof gives another proof to the fact that the spaces $\widehat{Q}(\Diff_0(\B\Sigma_g,\area))$ and $\widehat{Q}(\Ham(\B\Sigma_g))$ are infinite dimensional for $g>1$. Originally this fact was proved by Gambaudo-Ghys in \cite{GG}. Let $\B\Sigma_{g,k}$ be a compact orientable surface of genus $g$ with $k$ boundary components. Then, by the proof above, the spaces $\widehat{Q}(\Diff_0(\B\Sigma_{g,k},\area))$ are also infinite dimensional for $g>1$.
\end{rem}

\subsection{Braids traced by Morse autonomous flows}
\label{sec-braids-aut-flows}
Let $g,n>1$ and $h_t$ be an autonomous flow generated by a Morse function $H\colon\B\Sigma_g\to\B R$. We pick a point $x=(x_1,\ldots,x_n)\in\X_n(\B\Sigma_g)$ which satisfies the following conditions:

\begin{itemize}
\item
$x_i$ is a regular point for each $1\leq i\leq n$,
\item
connected components of $H^{-1}(H(x_i))$ and $H^{-1}(H(x_j))$, which are simple closed curves, are disjoint for each $1\leq i\neq j\leq n$.
\end{itemize}

Such a set of points in $\X_n(\B\Sigma_g)$ is denoted by $\mathrm{Reg}_H$. Note that the measure of $\X_n(\B\Sigma_g)\setminus\mathrm{Reg}_H$ is zero. If $x_i$ is a regular point of $H$, then the image of the path $\B R\to~\B\Sigma_g$, where $t\to h_t(x_i)$, is a simple closed curve. Let $c\colon[0,1]\to\B\Sigma_g$ be an injective path (on $(0,1)$), such that $c(0)=c(1)$ and its image is a simple closed curve defined above. Define
\begin{equation}\label{eq:gamma-reg}
\gamma_{x_i}(t):=
\begin{cases}
s_{z_ix_i}(3t) &\text{ for } t\in \left [0,\frac13\right ]\\
c_{3t-1}(x_i) &\text{ for } t\in \left [\frac13,\frac23\right ]\\
s_{x_iz_i}(3t-2) & \text{ for } t\in \left [\frac23,1\right ]
\end{cases}
\end{equation}
where the path $s$ was defined in Section \ref{SS:GG+results}.
For almost every $x\in\mathrm{Reg}_H$ and for each $1\leq i\leq n$ the path
$$\g_{h,i}(t):=(\widetilde{s}_{x_1}(t),\ldots,\widetilde{s}_{x_{i-1}}(t),\gamma_{x_i}(t),\widetilde{s}_{x_{i+1}}(t),\ldots,\widetilde{s}_{x_n}(t)),\quad t\in[0,1]$$
is in $\X_n(\B\Sigma_g)$, where
$$\widetilde{s}_{x_i}(t):=
\begin{cases}
s_{z_ix_i}(3t) &\text{ for } t\in \left [0,\frac13\right ]\\
x_i &\text{ for } t\in \left [\frac13,\frac23\right ]\\
s_{x_iz_i}(3t-2) & \text{ for } t\in \left [\frac23,1\right ]
\end{cases}.
$$
Let $x=(x_1,\ldots,x_n)\in\mathrm{Reg}_H$, and let $\b_{i,x}$ be a braid in $\B B_n(\B\Sigma_g)$ represented by the path $\g_{h,i}(t)$. Since the connected components of $H^{-1}(H(x_i))$ and $H^{-1}(H(x_j))$ are disjoint for $1\leq i\neq j\leq n$, the braids $\b_{i,x},\b_{j,x}$ commute for each $1\leq i,j\leq n$. For each $p\in\B N$ the braid $\g(h^p,x)$ can be written as a product
\begin{equation}\label{eq:braid-morse-aut}
\g(h_1^p,x)=\a'_{p,x}\circ\b_{1,x}^{k_{h_1,p,1}}\circ\ldots\circ
\b_{n,x}^{k_{h_1,p,n}}\circ\a''_{p,x}\thinspace,
\end{equation}
where $k_{h_1,p,i}$ is an integer which depends on $h_1$, $p$ and $x_i$, and the length of braids $\a'_{p,x}\thinspace, \a''_{p,x}$ is bounded by some constant $M(n)$ which depends only on $n$.

Denote by $\mathcal{MCG}_g^n$ the mapping class group of a surface $\B\Sigma_g$ with $n$ punctures. There is a following short exact sequence due to Birman \cite{Bir1}
\begin{equation}\label{eq:Birman-SES}
1\to \B B_n(\B\Sigma_g)\to\mathcal{MCG}_g^n\to\mathcal{MCG}_g\to 1,
\end{equation}
where $\mathcal{MCG}_g$ is the mapping class group of a surface $\B\Sigma_g$. Hence the surface braid group
$\B B_n(\B\Sigma_g)$ may be viewed as an infinite normal subgroup of $\mathcal{MCG}_g^n$ for $g>1$. Note that if $g=1, n>1$ then $\B B_n(\B\Sigma_1)$ is no longer a subgroup of $\mathcal{MCG}_1^n$, see \cite{Bir1}. Hence the case of the torus is different and will not be treated in this paper.

\begin{prop}\label{P:finite-conj-classes}
Let $g>1$. Then there exists a finite set $S_{g,n}$ of elements in $\mathcal{MCG}_g^n$, such that for a diffeomorphism $h_1\in\Ham(\B\Sigma_g)$, which is generated by a Morse function $H\colon\B\Sigma_g\to\B R$, and for each $x\in\mathrm{Reg}_H$ the braid $\b_{i,x}$ is conjugated in $\mathcal{MCG}_g^n$ to some element in $S_{g,n}$.
\end{prop}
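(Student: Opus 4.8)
The plan is to recognize each braid $\b_{i,x}$ as a \emph{point-pushing map} and then to invoke the change-of-coordinates principle for simple closed curves on surfaces, whose output depends only on $g$ and $n$, never on the particular Morse function $H$.

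First I would unwind the definition of $\gamma_{h,i}$. Write $\delta$ for the ``spreading'' path in $\X_n(\B\Sigma_g)$ that moves the base configuration $(z_1,\dots,z_n)$ to $(x_1,\dots,x_n)$ along the geodesics $s_{z_jx_j}$. On $[0,\tfrac13]$ the loop $\gamma_{h,i}$ performs $\delta$; on $[\tfrac13,\tfrac23]$ it pushes the $i$-th point once around the simple closed curve $c_i:=\{h_t(x_i)\}_t$ while the others rest at $x_j$; on $[\tfrac23,1]$ it performs $\delta^{-1}$. Hence as a based loop $\gamma_{h,i}=\delta*\mu_i*\overline\delta$, where $\mu_i$ pushes the $i$-th point around $c_i$ in the configuration $(x_1,\dots,x_n)$. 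Note that the defining conditions of $\mathrm{Reg}_H$ guarantee that $c_i$ is an embedded circle with $x_j\notin c_i$ for $j\ne i$, so $\mu_i$ is a legitimate path in $\X_n(\B\Sigma_g)$. Consequently, in $\B P_n(\B\Sigma_g)\subset\mathcal{MCG}_g^n$ the braid $\b_{i,x}$ is conjugate (by $\delta$) to the point-pushing class $\operatorname{Push}_i(c_i)$ of the $i$-th marked point along $c_i$.

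Second, I would pass to $\mathcal{MCG}_g^n$ in an essential way: via the Birman sequence \eqref{eq:Birman-SES} this gives access to \emph{all} mapping classes of $\B\Sigma_g$, not merely to braids, which is exactly the flexibility one needs. Point-pushing is natural, $[\Phi]\cdot\operatorname{Push}_i(c_i)\cdot[\Phi]^{-1}=\operatorname{Push}(\Phi(c_i))$ for any orientation-preserving homeomorphism $\Phi$ of $\B\Sigma_g$ carrying the marked set to itself. Therefore the conjugacy class of $\b_{i,x}$ in $\mathcal{MCG}_g^n$ is determined by the homeomorphism type of the triple $\big(\B\Sigma_g,\,\{x_1,\dots,x_n\},\,c_i\big)$: a simple closed curve through one marked point and disjoint from the other $n-1$. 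Finiteness of these types is then the heart of the matter. Cutting $\B\Sigma_g$ along $c_i$ produces a (possibly disconnected) surface of total genus at most $g$ with at most two new boundary circles, carrying the remaining $n-1$ marked points; since $c_i$ is null-homotopic, non-separating, or separating with sides of bounded genus, and the only further datum is how the $n-1$ points distribute among the finitely many complementary regions, there are finitely many such types once $g,n$ are fixed. By the change-of-coordinates principle, triples of the same type are interchanged by a homeomorphism of $\B\Sigma_g$ and hence yield conjugate point-pushes. Selecting one representative per type gives a finite set $S_{g,n}\subset\mathcal{MCG}_g^n$, depending only on $g$ and $n$, to which every $\b_{i,x}$ is conjugate; the argument is uniform in $H$ because it uses only that $c_i$ is \emph{some} simple closed curve disjoint from the other marked points.

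The main obstacle is the bookkeeping in the finiteness step: one must confirm that the homeomorphism type of $\big(\B\Sigma_g,\{x_j\},c_i\big)$ \emph{determines} the conjugacy class of the point-push (so that finitely many types force finitely many conjugacy classes), and one must handle the degenerate case of a null-homotopic $c_i$ bounding a disc containing several of the remaining points, which still contributes only finitely many types, indexed by the number of enclosed points. Both the naturality of point-pushing and the transitivity in the change-of-coordinates principle are standard surface topology, but they must be applied with the (unordered) marked points tracked correctly through the cut-and-reglue.
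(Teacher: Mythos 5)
Your proposal is correct and is essentially the paper's own argument: the paper also identifies $\b_{i,x}$ with the push of the $i$-th point around the simple closed curve traced by the flow, writing it (in the essential case) as the Dehn-twist difference $t_{\delta_{h,+}}\circ t^{-1}_{\delta_{h,-}}$ and invoking naturality of twists under ambient homeomorphisms together with the finiteness of equivalence classes $\mathcal{R}_g$ of simple closed curves in $\B\Sigma_{g,n-1}$ --- exactly your point-pushing naturality plus change-of-coordinates finiteness. The only presentational difference is that the paper splits into two explicit cases (null-homotopic curves, giving the Artin braids $\eta_{j+1,n}$ indexed by the number of enclosed points, versus essential curves), whereas you treat both uniformly as topological types of the triple $\bigl(\B\Sigma_g,\{x_j\},c_i\bigr)$; your remark that inverses (orientations of the push) must be accounted for is handled in the paper by including the $\pm1$ powers in $S_{g,n}$.
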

\begin{proof} Let $x\in\mathrm{Reg}_H$.

\textbf{Case 1.} Suppose that $\b_{i,x}$ is such that the image of the path $t\to h_t(x_i)$ is homotopically trivial in $\B\Sigma_g$. Hence there exists an embedded disc in $\B\Sigma_g$ whose boundary is the image of this curve. There exists some $j\in\{0,\ldots,n-1\}$, such that there are exactly $j$ points from the set $\{x_l\}_{l=1, l\neq i}^n$, that lie in this disc. It follows from the definition of $\b_{i,x}$ that it is conjugate in $\B B_n(\B\Sigma_g)$ (and hence in $\mathcal{MCG}_g^n$) to the braid $\eta_{j+1,n}\in \B B_n<\B B_n(\B\Sigma_g)$ shown in Figure \ref{fig:braids-eta-j-n}.

%%%%%%%%%%%%%%%%%%%%%%%%%%%%%%%%%%%%%%%%%%%%%%%%%%%%%%%%%%%%%%%%%%%%%%%
\begin{figure}[htb]
\centerline{\includegraphics[height=1.6in]{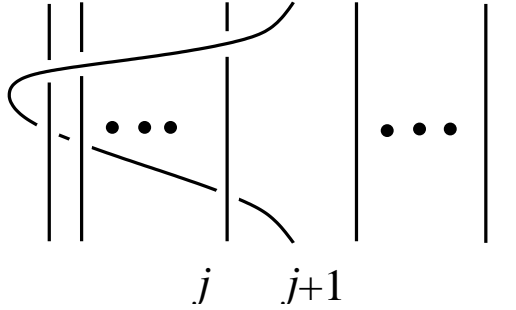}}
\caption{\label{fig:braids-eta-j-n} Braid $\eta_{j+1,n}$}
\end{figure}
%%%%%%%%%%%%%%%%%%%%%%%%%%%%%%%%%%%%%%%%%%%%%%%%%%%%%%%%%%%%%%%%%%%%%

\textbf{Case 2.} Suppose that $\b_{i,x}$ is such that the image of the path $t\to h_t(x_i)$ is homotopically non-trivial in $\B\Sigma_g$. Denote by $\B\Sigma_{g,n-1}$ a surface $\B\Sigma_g$ with $n-1$ punctures. We say that simple closed curves $\delta$ and $\delta'$ are equivalent $\delta\cong\delta'$ if there exists a homeomorphism
$$f\colon\B\Sigma_{g,n-1}\to\B\Sigma_{g,n-1}$$
such that $f(\delta)=\delta'$. It follows from classification of surfaces that the set of equivalence classes $\mathcal{R}_g$ is finite.

Denote by $\delta_h$ a simple closed curve which is an image of the path $t\to h_t(x_i)$. Since $\B\Sigma_g$ and $\delta_h$ are oriented, the curve $\delta_h$ splits in $\B\Sigma_g\setminus\{x_1,\ldots,x_n\}$ into two simple closed curves $\delta_{h,-}$ and $\delta_{h,+}$ which are homotopic in $\B\Sigma_g\setminus\{x_1,\ldots,x_{i-1},x_{i+1},\ldots,x_n\}$, see Figure \ref{fig:path-split}.
%%%%%%%%%%%%%%%%%%%%%%%%%%%%%%%%%%%%%%%%%%%%%%%%%%%%%%%%%%%%%%%%%%%%%%%
\begin{figure}[htb]
\centerline{\includegraphics[height=1.5in]{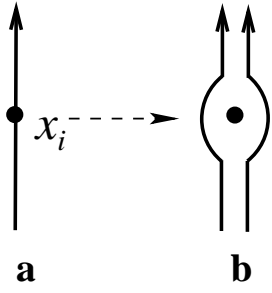}}
\caption{\label{fig:path-split} Part of the curve $\delta_h$ is shown in Figure \textbf{a}. Its splitting into curves $\delta_{h,-}$ and $\delta_{h,+}$ is shown in Figure \textbf{b}. The left curve in Figure \textbf{b} is $\delta_{h,-}$ and the right curve is $\delta_{h,+}$}
\end{figure}
%%%%%%%%%%%%%%%%%%%%%%%%%%%%%%%%%%%%%%%%%%%%%%%%%%%%%%%%%%%%%%%%%%%%%

The image of the braid $\b_{i,x}$ in $\mathcal{MCG}_g^n$ under the Birman embedding of $\B B_n(\B\Sigma_g)$ into $\mathcal{MCG}_g^n$ is conjugated to $t_{\delta_{h,+}}\circ t^{-1}_{\delta_{h,-}}$, where $t_{\delta_{h,+}}$ and $t_{\delta_{h,-}}$ are Dehn twists in $\B\Sigma_g\setminus\{x_1,\ldots,x_n\}$ about $\delta_{h,+}$ and $\delta_{h,-}$ respectively. Note that if $\delta_h\cong\delta'$ then there exists a homeomorphism $f\colon\B\Sigma_{g,n-1}\to\B\Sigma_{g,n-1}$ such that $f(\delta_h)=\delta'$, hence $f(\delta_{h,+})=\delta'_+$ and $f(\delta_{h,-})=\delta'_-$. We have
$$
t_{\delta'_+}=f\circ t_{\delta_{h,+}}\circ f^{-1}\qquad t_{\delta'_-}=f\circ t_{\delta_{h,-}}\circ f^{-1}.
$$
This yields
$$f\circ(t_{\delta_{h,+}}\circ t^{-1}_{\delta_{h,-}})\circ f^{-1}=t_{\delta'_+}\circ t^{-1}_{\delta'_-}.$$
In other words, the braid $\b_{i,x}$ is conjugated in $\mathcal{MCG}_g^n$ to some $t_{\delta_+}\circ t^{-1}_{\delta_-}$, where $\delta$ is some representative of an equivalence class in $\mathcal{R}_g$. Let $\{\delta_i\}_{i=1}^{\#\mathcal{R}_g}$ be a set of simple closed curves in $\B\Sigma_g$, such that each equivalence class in $\mathcal{R}_g$ is represented by some $\delta_i$. Let
$$S_{g,n}:=\left\{\eta_{1,n}^{\pm 1},\ldots,\eta_{n,n}^{\pm 1},
\left(t_{\delta_{1,+}}\circ t^{-1}_{\delta_{1,-}}\right)^{\pm 1},\ldots,
\left(t_{\delta_{\#\mathcal{R}_g,+}}\circ t^{-1}_{\delta_{\#\mathcal{R}_g,-}}\right)^{\pm 1}\right\}.$$
It follows that $\b_{i,x}$ is conjugated to some element in $S_{g,n}$.
Note that the set $S_{g,n}$ is independent of $H$, $h_1$ and $x\in\mathrm{Reg}_H$ and the proof follows.
\end{proof}

\subsection{Braids traced by general autonomous flows}
\label{sec-braids-gen-aut-flows}
Let $g,n>1$ and $h_t$ be an autonomous flow generated by a function $H\colon\B\Sigma_g\to\B R$. Let $e\in\B R$ such that $H^{-1}(e)$ is a critical level set of $H$. Since $h_t$ is an autonomous flow, the set $H^{-1}(e)$ decomposes as a union of curves
\begin{equation}\label{eq:decomposition}
H^{-1}(e)=\bigcup\limits_{y\in H^{-1}(e)}h_t(y),
\end{equation}
where the image of each curve $h_t(y):\B R\to H^{-1}(e)$ is a point or a simple curve. We pick a point $x=(x_1,\ldots,x_n)\in\X_n(\B\Sigma_g)$ which satisfies the following conditions:

\begin{itemize}
\item
$x_i$ is a regular or critical point for each $1\leq i\leq n$.
\item
For each pair $(x_i, x_j)$, where at least one of the points belongs to the regular level set of $H$, we require the connected components of the sets $H^{-1}(H(x_i))$ and $H^{-1}(H(x_j))$ are disjoint for each $1\leq i\neq j\leq n$.
\item
For each pair $(x_i, x_j)$, where both $x_i,x_j$ belong to the same critical level set $H^{-1}(e)$, we require that they lie on disjoint curves in the decomposition of $H^{-1}(e)$ as a union of curves, see ~\eqref{eq:decomposition}.
\end{itemize}

Such a set of points in $\X_n(\B\Sigma_g)$ is denoted by $\mathrm{Gen}_H$. Note that the measure of $\X_n(\B\Sigma_g)\setminus\mathrm{Gen}_H$ is zero, and $\mathrm{Reg}_H\subset\mathrm{Gen}_H$.

Let $x\in\X_n(\B\Sigma_g)$. If $x_i$ belongs to the regular level set of $H$, then $\gamma_{x_i}(t)$ is defined in ~\eqref{eq:gamma-reg}. Suppose that $x_i$ belongs to the critical level set of $H$. The image of the path $\B R\to~\B\Sigma_g$, where $t\to h_t(x_i)$, is a simple curve, because $h_t(x_i)$ is an integral curve of a time-independent vector field. For each $p\in\B N$ set
$$
\gamma_{x_i,p}(t):=
\begin{cases}
s_{z_ix_i}(3t) &\text{ for } t\in \left [0,\frac13\right ]\\
h_{p(3t-1)}(x_i) &\text{ for } t\in \left [\frac13,\frac23\right ]\\
s_{h_p(x_i)z_i}(3t-2) & \text{ for } t\in \left [\frac23,1\right ],
\end{cases}
$$
where the path $s$ was defined in Section \ref{SS:GG+results}.

For almost every $x\in\mathrm{Gen}_H$, for each $1\leq i\leq n$ and $p\in\B N$, the path
$$\g_{h,i,p}(t):=(\widetilde{s}_{x_1}(t),\ldots,\widetilde{s}_{x_{i-1}}(t),\gamma_{x_i,p}(t),\widetilde{s}_{x_{i+1}}(t),\ldots,\widetilde{s}_{x_n}(t)),\quad t\in[0,1]$$ is in $\X_n(\B\Sigma_g)$, where
$$\widetilde{s}_{x_i}(t):=
\begin{cases}
s_{z_ix_i}(3t) &\text{ for } t\in \left [0,\frac13\right ]\\
x_i &\text{ for } t\in \left [\frac13,\frac23\right ]\\
s_{x_iz_i}(3t-2) & \text{ for } t\in \left [\frac23,1\right ]
\end{cases}.
$$
Let $\b_{i,x,p}$ be a braid in $\B B_n(\B\Sigma_g)$ represented by the path $\g_{h,i,p}(t)$. Let $x\in\mathrm{Gen}_H$. Since the curves $h_{p(3t-1)}(x_i)$ and $h_{p(3t-1)}(x_j)$ are disjoint for $1\leq i\neq j\leq n$, the braids $\b_{i,x,p},\b_{j,x,p}$ commute for each $1\leq i,j\leq n$. For each $p\in\B N$ the braid $\g(h^p,x)$ can be written as a product
\begin{equation}\label{eq:braid-gen-aut}
\g(h_1^p,x)=\a'_{p,x}\circ\b_{1,x,p}\circ\ldots\circ\b_{n,x,p}\circ\a''_{p,x}\thinspace,
\end{equation}
where the length of braids $\a'_{p,x}\thinspace, \a''_{p,x}$ is bounded by some constant $M(n)$ which depends only on $n$. Moreover, if $x_i$ belongs to a critical level set $H^{-1}(e)$, then:
%%%%%%%%%%%%%%%%%%%%%%%%%%%%%%%%%%%%%%%%%%%%%%%%%%%%%%%%%%%%%%%%%%%%%%%
\begin{figure}[htb]
\centerline{\includegraphics[height=1.2in]{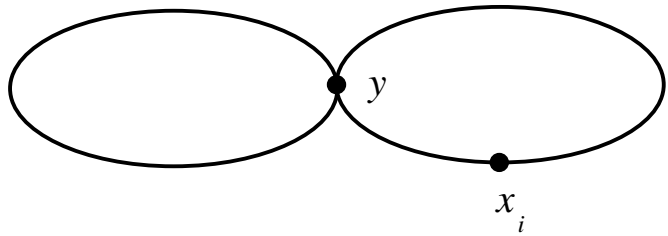}}
\caption{\label{fig:fig8-level-set} Suppose that for some $e\in\B R$, the set $H^{-1}(e)$ is a figure eight curve. Hence $y$ is a critical point, and suppose that $x_i$ is a regular point. The point $y$ can not be in the image of the curve $\B R\to h_t(x_i)$ and the curve $h_t(x_i)$ does not reverse the orientation, because $h_t$ is an autonomous flow. It follows that the curve $h_t(x_i)$ has a finite length.}
\end{figure}
%%%%%%%%%%%%%%%%%%%%%%%%%%%%%%%%%%%%%%%%%%%%%%%%%%%%%%%%%%%%%%%%%%%%%

\begin{itemize}
\item
if $x_i$ is a critical point of $H$, then the braid $\b_{i,x,p}$ is the identity for every $p$, since $\forall t\in\B R$ the flow $h_t(x_i)=x_i$.

\item
if $x_i$ belongs to a critical set $H^{-1}(e)$, but $x_i$ is not a critical point, then $x_i$ belongs to a simple curve in the decomposition of $H^{-1}(e)$ into union of curves. In this case, the length of the braid $\b_{i,x,p}$ is bounded, uniformly for all $p$, by some constant $M'(n)$ which depends only on $n$. This follows from the fact that the curve $h_t(x_i)$ has a finite length. For example, see Figure \ref{fig:fig8-level-set}.
\end{itemize}
Also note that if $x_i$ belongs to regular level set of $H$, then the braid $\b_{i,x,p}=\b_{i,x}^{k_{h_1,p,i}}$,
where $\b_{i,x}^{k_{h_1,p,i}}$ is the braid defined in ~\eqref{eq:braid-morse-aut}.

\begin{prop}\label{P:gen-finite-conj-classes}
Let $g>1$. Then for $h_1\in\Ham(\B\Sigma_g)$, which is generated by a function $H\colon\B\Sigma_g\to\B R$ and for $x=(x_1,\ldots,x_n)\in\mathrm{Gen}_H$ there exists $k_p\in\B Z$, such that for each $x_i$ the braid $\b_{i,x,p}$, defined in ~\eqref{eq:braid-gen-aut}, is conjugated to $s^{k_p}$, where $s$ is some element in the set $S_{g,n}$ which was defined in Proposition \ref{P:finite-conj-classes}.
\end{prop}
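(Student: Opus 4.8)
The plan is to reduce the statement to Proposition~\ref{P:finite-conj-classes} by examining separately the three kinds of points $x_i$ permitted in the definition of $\mathrm{Gen}_H$: critical points of $H$, non-critical points lying on a critical level set, and regular points. For each kind I will produce an integer exponent (which I denote $k_p$, following the statement, although it depends on the orbit through $x_i$ and hence on the component of the level set containing $x_i$) together with an element $s\in S_{g,n}$ such that $\b_{i,x,p}$ is conjugate in $\mathcal{MCG}_g^n$ to $s^{k_p}$. The element $s$ and the finiteness of $S_{g,n}$, as well as its independence of $H$, $h_1$ and $x$, will all be inherited from Proposition~\ref{P:finite-conj-classes}; the only genuinely new input concerns the non-periodic orbits on critical level sets.

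The two extreme cases are immediate. If $x_i$ is a critical point then $h_t(x_i)=x_i$ for all $t$, so $\b_{i,x,p}$ is trivial and is conjugate to $s^{0}$ for any $s\in S_{g,n}$. If $x_i$ lies on a regular level set, then the component of $H^{-1}(H(x_i))$ through $x_i$ is a simple closed curve along which the autonomous flow rotates periodically with some period $T_i>0$, and running the flow for time $p$ winds this circle $k_{h_1,p,i}=\lfloor p/T_i\rfloor$ times up to a partial arc of bounded length. This is precisely the identity $\b_{i,x,p}=\b_{i,x}^{\,k_{h_1,p,i}}$ recorded just before the statement, in which $\b_{i,x}$ is the once-around braid of \eqref{eq:braid-morse-aut}. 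Proposition~\ref{P:finite-conj-classes} furnishes $s\in S_{g,n}$ with $\b_{i,x}$ conjugate to $s$, and raising to the $k_{h_1,p,i}$-th power then exhibits $\b_{i,x,p}$ as conjugate to $s^{k_{h_1,p,i}}$.

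The remaining case, where $x_i$ lies on a critical level set $H^{-1}(e)$ but is not itself critical, carries the real content. Here the orbit $t\mapsto h_t(x_i)$ is an integral curve of the time-independent field $\sgrad H$, hence a \emph{simple} curve, and by the finite-length phenomenon illustrated in Figure~\ref{fig:fig8-level-set} it has finite total length with both limits landing on critical points of $H$. I would first argue that the based loop $\g_{x_i,p}$ therefore stabilizes: once $h_p(x_i)$ has entered a small neighbourhood of its forward limit and passed all the punctures $x_j$, the class of $\g_{x_i,p}$ in $\pi_1(\B\Sigma_g\setminus\{x_1,\dots,x_n\})$ no longer changes with $p$, so $\b_{i,x,p}$ is eventually constant. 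I would then identify this stabilized braid by the same Dehn-twist computation as in Case~2 of Proposition~\ref{P:finite-conj-classes}: the closure of the separatrix is a simple closed curve $\delta_h$, its splitting $\delta_{h,\pm}$ under the Birman embedding \eqref{eq:Birman-SES} makes the point-pushing braid conjugate to $t_{\delta_{h,+}}\circ t_{\delta_{h,-}}^{-1}$ (or, if the separatrix bounds a disc containing $j$ of the punctures, to an $\eta_{j+1,n}$ as in Case~1), and in either situation this is conjugate to $s^{\pm 1}$ for a suitable $s\in S_{g,n}$, so one takes $k_p\in\{-1,0,1\}$.

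I expect this last case to be the principal obstacle, precisely because $H$ is not assumed Morse: a critical level set may be a nontrivial graph of critical points joined by several separatrices, so the orbit through $x_i$ need not close up into a single embedded loop. The careful points are to confirm that the stabilized loop is freely homotopic to one of the two model curves of Proposition~\ref{P:finite-conj-classes}---homoclinic separatrices give embedded closed curves directly, while saddle connections must be shown to contribute either a trivial or an $\eta$-type class---and to check that the induced orientations on $\delta_{h,\pm}$ match so that the mapping class is a genuine $\pm 1$ power rather than a more complicated product of twists. Since critical level sets have measure zero, this case plays no role in the integral estimates that follow, but it must still be settled to make the pointwise assertion valid on all of $\mathrm{Gen}_H$.
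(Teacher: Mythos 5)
Your reduction is exactly the paper's own proof: the paper's proof of this proposition is literally the single sentence ``The proof is identical to the proof of Proposition \ref{P:finite-conj-classes},'' and your treatment of the first two cases spells out precisely what that reference covers. For a critical point $x_i$ the braid $\b_{i,x,p}$ is trivial, i.e. $s^0$; for $x_i$ on a regular level set one has $\b_{i,x,p}=\b_{i,x}^{k_{h_1,p,i}}$ (as recorded just before the proposition), and Proposition \ref{P:finite-conj-classes} gives $s\in S_{g,n}$ with $\b_{i,x}$ conjugate to $s$ in $\mathcal{MCG}_g^n$, hence $\b_{i,x,p}$ conjugate to $s^{k_{h_1,p,i}}$. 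Up to this point your argument and the paper's agree completely.

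The third case, which you rightly single out as the principal obstacle, is where your proposal has a genuine gap --- and it is a gap the paper sidesteps rather than fills. When $x_i$ is a non-critical point of a critical level set, the path $t\mapsto h_t(x_i)$, $0\le t\le p$, is a \emph{proper sub-arc} of the separatrix: in the homoclinic case the forward orbit covers only the portion of the closed curve running from $x_i$ to the limiting critical point (the remainder is covered by the backward orbit), and in the saddle-connection case it is an arc joining two distinct critical points. The loop $\g_{x_i,p}$ is this sub-arc closed up by the two a priori chosen geodesics of Section \ref{SS:GG+results}, so its class in the fundamental group of the punctured surface depends on those geodesic choices and need not be freely homotopic to a simple closed curve, nor trivial. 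Consequently the braid $\b_{i,x,p}$, although of uniformly bounded word length, need not be conjugate to $t_{\delta_{h,+}}\circ t^{-1}_{\delta_{h,-}}$, to any $\eta_{j+1,n}$, or indeed to any power of an element of $S_{g,n}$; your proposed identification of the ``stabilized'' braid with one of the model classes fails for exactly this reason. The saving grace is that this case is never needed: in the second proof of Lemma \ref{L:Aut-zero} the paper invokes Proposition \ref{P:gen-finite-conj-classes} only for $x_i$ on regular level sets, and handles points on critical level sets by the uniform length bound $M'(n)$ stated before the proposition; either property yields $\lim_{p\to\infty}\varphi(\b_{i,x,p})/p=0$, which is all the application uses. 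So the correct repair is the one you almost arrive at: prove the conjugacy statement for regular and critical $x_i$, and for the remaining points replace the conjugacy claim by the length bound instead of trying to force those braids into powers of elements of $S_{g,n}$.
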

\begin{proof}
The proof is identical to the proof of Proposition \ref{P:finite-conj-classes}.
\end{proof}

\subsection{Proof of Theorem \ref{T:main}}
\label{sec-proof-main}
Recall that we view $\B B_n(\B\Sigma_g)$ as a normal subgroup of $\mathcal{MCG}_g^n$ and $\B B_n<\B B_n(\B\Sigma_g)$. For each $g,n>1$ denote by $Q_{\mathcal{MCG}_g^n}(\B B_n, S_{g,n})$ the space of homogeneous quasi-morphisms on $\B B_n(\B\Sigma_g)$ so that:
\begin{itemize}
\item
For each $\varphi\in Q_{\mathcal{MCG}_g^n}(\B B_n, S_{g,n})$ there exists $\widehat{\varphi}\in Q(\mathcal{MCG}_g^n)$ such that $\widehat{\varphi}|_{\B B_n}=\varphi|_{\B B_n}$, and $\widehat{\varphi}$ vanishes on the finite set $S_{g,n}$.

\item there are infinitely many linearly independent homogeneous quasi-morphisms in $Q_{\mathcal{MCG}_g^n}(\B B_n, S_{g,n})$ that remain linearly independent when restricted to $\B B_n$.
\end{itemize}

In \cite{BBF} Bestvina-Bromberg-Fujiwara classified all elements $\g\in\mathcal{MCG}_g^n$ for which there is a quasi-morphism on $\mathcal{MCG}_g^n$ which is unbounded on the powers of $\g$, in terms of the Nielsen-Thurston decomposition. As a Corollary they proved the following

\begin{prop}[Proposition 4.9 in \cite{BBF}]\label{P:BBF}
Let $\B\Sigma'\subset\B\Sigma_{g,n}$ be a subsurface that supports pseudo-Anosov
homeomorphisms. Then the restriction map
$$\widehat{Q}(\mathcal{MCG}_g^n)\to \widehat{Q}(\mathcal{MCG}(\B\Sigma'))$$
has infinite dimensional image. Here $\mathcal{MCG}(\B\Sigma')$ is a mapping class group of a subsurface $\B\Sigma'$.
\end{prop}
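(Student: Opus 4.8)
The plan is to exhibit the quasi-morphisms on $\mathcal{MCG}_g^n$ geometrically, via an isometric action on a hyperbolic space together with the Bestvina--Fujiwara construction \cite{BF}, and then to control what survives under restriction. Observe first that the subsurface inclusion $\mathcal{MCG}(\B\Sigma')\hookrightarrow\mathcal{MCG}_g^n$ (extend a mapping class of $\B\Sigma'$ by the identity on the complement) is exactly the inclusion dual to the restriction map in the statement. Thus it suffices to produce infinitely many linearly independent homogeneous quasi-morphisms on $\mathcal{MCG}_g^n$ whose restrictions to the subgroup $\mathcal{MCG}(\B\Sigma')$ remain linearly independent.

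The first issue is the choice of space to act on. A pseudo-Anosov supported on the \emph{proper} subsurface $\B\Sigma'$ fixes the isotopy class of $\partial\B\Sigma'$, and therefore does \emph{not} act loxodromically on the curve complex $\mathcal{C}(\B\Sigma_{g,n})$ of the whole surface; so one cannot simply feed the ambient action into the Bestvina--Fujiwara machine. Instead I would use the projection complex, or quasi-tree of metric spaces, of Bestvina--Bromberg--Fujiwara \cite{BBF} built on the orbit $\mathbf{Y}=\mathcal{MCG}_g^n\cdot\B\Sigma'$ of copies of $\B\Sigma'$. The vertex spaces are the curve complexes $\mathcal{C}(Y)$ for $Y\in\mathbf{Y}$, the edge data come from subsurface projections, and the Behrstock inequality supplies the axioms needed to assemble a hyperbolic quasi-tree of spaces $\mathcal{C}(\mathbf{Y})$ on which $\mathcal{MCG}_g^n$ acts by isometries.

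Next I would fix a pseudo-Anosov $f\in\mathcal{MCG}(\B\Sigma')<\mathcal{MCG}_g^n$. It stabilizes the vertex space $\mathcal{C}(\B\Sigma')$ and acts loxodromically inside it, which upgrades to a loxodromic action on $\mathcal{C}(\mathbf{Y})$. The essential point is to verify that $f$ is a WPD (weakly properly discontinuous) element for this action; this is where the finiteness built into the projection complex and the structure of the stabilizer of $\B\Sigma'$ in $\mathcal{MCG}_g^n$ are used. Given one WPD loxodromic, the Bestvina--Fujiwara construction yields infinitely many linearly independent homogeneous quasi-morphisms on $\mathcal{MCG}_g^n$, each obtained by counting signed copies of a prescribed oriented segment of the axis of $f$ (or of suitable independent conjugates and powers) inside the quasi-geodesic associated to a given group element.

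The main obstacle, and the final step, is to guarantee that these quasi-morphisms stay linearly independent after restriction to $\mathcal{MCG}(\B\Sigma')$. The key is that the counting functions can be localized: by choosing the defining axis segments to lie entirely in the factor $\mathcal{C}(\B\Sigma')$, the construction restricted to the subgroup $\mathcal{MCG}(\B\Sigma')$ --- which fixes $\B\Sigma'$ and moves only within $\mathcal{C}(\B\Sigma')$ --- collapses to the ordinary Bestvina--Fujiwara construction for $\mathcal{MCG}(\B\Sigma')$ acting on its own curve complex $\mathcal{C}(\B\Sigma')$, whose image is already infinite dimensional. What must be checked is that a segment lying in $\mathcal{C}(\B\Sigma')$ cannot be spuriously matched against subpaths passing through the other vertex spaces $Y\neq\B\Sigma'$; the separation properties of the projection complex, namely that distinct translates project to bounded sets in one another via the Behrstock inequality, rule this out. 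Hence the constructed quasi-morphisms restrict to a linearly independent, and so infinite-dimensional, family on $\mathcal{MCG}(\B\Sigma')$, and this localization-and-separation argument is the crux of the proof.
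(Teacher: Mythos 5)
The first thing to note is that the paper contains no proof of Proposition \ref{P:BBF} at all: it is quoted, with attribution, as Proposition 4.9 of \cite{BBF} and used purely as a black box to deduce Corollary \ref{cor:imp-inf-dim-cor}. So your attempt can only be measured against the argument in \cite{BBF} itself, and by that measure your outline reconstructs the genuine strategy quite faithfully: build the Bestvina--Bromberg--Fujiwara quasi-tree of curve complexes over the orbit $\mathbf{Y}=\mathcal{MCG}_g^n\cdot\B\Sigma'$, check that a pseudo-Anosov supported on $\B\Sigma'$ acts loxodromically and WPD on it (you are right that the ambient curve complex is useless here, since such a class is reducible in $\mathcal{MCG}_g^n$), run the Bestvina--Fujiwara counting construction \cite{BF}, and exploit the fact that the vertex space $\mathcal{C}(\B\Sigma')$ sits isometrically embedded in the quasi-tree so that the resulting quasi-morphisms restrict to an independent family on $\mathcal{MCG}(\B\Sigma')$. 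That is the right architecture.

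There is, however, one step you present as automatic that actually fails for the family you chose, and it is the place where \cite{BBF} have to do real work. The projection axioms (bounded projections, Behrstock inequality, finiteness) presuppose that the projection $\pi_Y(Z)$ is \emph{defined} for every pair of distinct members of $\mathbf{Y}$, i.e.\ that any two distinct translates of $\B\Sigma'$ overlap. The full orbit generally violates this: already in genus $2$ a one-holed torus has disjoint, non-isotopic translates, and subsurface projection between disjoint subsurfaces does not exist. Bestvina--Bromberg--Fujiwara repair this by a finite coloring of the orbit so that same-colored subsurfaces pairwise overlap, build the projection complex for a single color class, and work in the finite-index subgroup preserving that class; the quasi-morphisms produced there must then be carried back to the whole mapping class group (by averaging conjugates over cosets and homogenizing), and one has to check that their values on powers of the chosen pseudo-Anosov elements of $\mathcal{MCG}(\B\Sigma')$ survive the averaging --- this non-cancellation is not formal. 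Taken literally, with $\mathbf{Y}$ the full orbit, your construction cannot be run; with the coloring and the induction step added, it becomes essentially the proof in \cite{BBF}. The WPD verification you defer is also genuine work, but of the expected kind; the missing coloring/finite-index/induction step is the one place where the argument as written breaks.
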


Let $n>2$. We identify $\B B_n<\mathcal{MCG}_g^n$ with a mapping class group of an $n$-punctured disc. Since there are infinitely many pseudo-Anosov braids in $\B B_n$, it follows from Proposition \ref{P:BBF} that there exist infinitely many linearly independent homogeneous quasi-morphisms on $\mathcal{MCG}_g^n$ that remain linearly independent when restricted to $\B B_n$. Combining this with the fact that the set $S_{g,n}$ is finite we have the following important

\begin{cor}\label{cor:imp-inf-dim-cor}
Let $g>1$, $n>2$. Then $\dim(Q_{\mathcal{MCG}_g^n}(\B B_n, S_{g,n}))=\infty$.
\end{cor}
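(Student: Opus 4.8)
The plan is to establish Corollary~\ref{cor:imp-inf-dim-cor} by combining the infinite-dimensionality coming from Proposition~\ref{P:BBF} with the two defining requirements of the space $Q_{\mathcal{MCG}_g^n}(\B B_n, S_{g,n})$. Recall that $\B B_n$, viewed as the mapping class group of the $n$-punctured disc, sits inside $\mathcal{MCG}_g^n$ as the subsurface mapping class group of a disc containing the $n$ punctures; since $n>2$, this punctured disc supports pseudo-Anosov homeomorphisms. Hence Proposition~\ref{P:BBF}, applied to $\B\Sigma'$ equal to this $n$-punctured disc, tells me that the restriction map $\widehat{Q}(\mathcal{MCG}_g^n)\to\widehat{Q}(\B B_n)$ has infinite-dimensional image. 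This directly produces an infinite-dimensional supply of homogeneous quasi-morphisms on $\mathcal{MCG}_g^n$ whose restrictions to $\B B_n$ are linearly independent, which settles the second bullet in the definition of $Q_{\mathcal{MCG}_g^n}(\B B_n, S_{g,n})$.

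\emph{First} I would fix an infinite linearly independent family $\{\widehat{\varphi}_k\}_{k\in\B N}\subset Q(\mathcal{MCG}_g^n)$ whose restrictions to $\B B_n$ remain linearly independent, as furnished above. The only remaining obstruction is the first bullet: each representative must be arranged to \emph{vanish} on the finite set $S_{g,n}$. The key observation is that homogeneous quasi-morphisms are invariant under conjugation and additive on commuting elements, so I have freedom to correct each $\widehat{\varphi}_k$ on the finitely many conjugacy classes represented by $S_{g,n}$ without destroying its restriction to the pseudo-Anosov elements of $\B B_n$ used to witness linear independence. Concretely, since $S_{g,n}$ is a fixed finite set, I would subtract from the linear span an appropriate finite-codimension correction: the condition ``$\widehat{\varphi}(s)=0$ for all $s\in S_{g,n}$'' imposes at most $\#S_{g,n}$ linear constraints on the infinite-dimensional space $\operatorname{span}\{\widehat{\varphi}_k\}$, so the subspace of quasi-morphisms satisfying all of them still has infinite dimension.

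\emph{The hard part} will be ensuring that after imposing these finitely many vanishing constraints, the resulting quasi-morphisms remain linearly independent \emph{upon restriction to} $\B B_n$. An abstract codimension count alone guarantees an infinite-dimensional space vanishing on $S_{g,n}$, but a priori the linear functionals ``evaluate on $s\in S_{g,n}$'' and the functionals recording the restriction to $\B B_n$ could interact badly. To handle this cleanly, I would work inside the infinite-dimensional space $\C{V}:=\operatorname{image}\bigl(\widehat{Q}(\mathcal{MCG}_g^n)\to\widehat{Q}(\B B_n)\bigr)$ of \emph{restrictions}, and consider the finitely many evaluation functionals $\mathrm{ev}_s$, $s\in S_{g,n}$, on it. Their common kernel is a subspace of $\C{V}$ of codimension at most $\#S_{g,n}<\infty$, hence still infinite-dimensional; every element of this kernel lifts to some $\widehat{\varphi}\in Q(\mathcal{MCG}_g^n)$ vanishing on $S_{g,n}$ (after homogenizing, and using that the lift may be adjusted on the conjugacy classes of $S_{g,n}$ since those elements lie outside the subsurface supporting the relevant pseudo-Anosovs).

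Assembling these pieces: the common kernel provides infinitely many linearly independent restrictions $\varphi=\widehat{\varphi}|_{\B B_n}$, each extending to $\widehat{\varphi}\in Q(\mathcal{MCG}_g^n)$ with $\widehat{\varphi}|_{S_{g,n}}=0$. By definition these are precisely elements of $Q_{\mathcal{MCG}_g^n}(\B B_n, S_{g,n})$, and their linear independence on $\B B_n$ is exactly the second required property. Therefore $\dim\bigl(Q_{\mathcal{MCG}_g^n}(\B B_n, S_{g,n})\bigr)=\infty$, completing the proof of the corollary.
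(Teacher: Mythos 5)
Your overall route is the same as the paper's: apply Proposition~\ref{P:BBF} to the $n$-punctured disc (which supports pseudo-Anosov maps since $n>2$) to obtain infinitely many homogeneous quasi-morphisms on $\mathcal{MCG}_g^n$ whose restrictions to $\B B_n$ are linearly independent, and then use the finiteness of $S_{g,n}$ to cut by finitely many linear conditions. The first half is correct, and your second paragraph even states the right mechanism (the conditions $\widehat{\varphi}(s)=0$, $s\in S_{g,n}$, are finitely many linear constraints on $\operatorname{span}\{\widehat{\varphi}_k\}$). But the step you yourself single out as the crux --- the ``clean handling'' via evaluation functionals on $\mathcal{V}=\operatorname{image}\bigl(\widehat{Q}(\mathcal{MCG}_g^n)\to\widehat{Q}(\B B_n)\bigr)$ --- fails as written. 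The set $S_{g,n}$ is \emph{not} contained in $\B B_n$: besides the braids $\eta_{j,n}^{\pm1}$, it contains the elements $\bigl(t_{\delta_{i,+}}\circ t_{\delta_{i,-}}^{-1}\bigr)^{\pm1}$ with $\delta_i$ essential in $\B\Sigma_g$, which lie in $\B B_n(\B\Sigma_g)<\mathcal{MCG}_g^n$ but not in the disc subgroup $\B B_n$. For such $s$ the value $\widehat{\varphi}(s)$ is not a function of the restriction $\widehat{\varphi}|_{\B B_n}$ (the restriction map has a large kernel, whose elements can be nonzero at $s$), so $\operatorname{ev}_s$ is simply not well defined on $\mathcal{V}$; there is also the smaller issue that evaluation at a group element is not defined on $\widehat{Q}$, i.e.\ modulo homomorphisms. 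Your proposed remedy --- that ``the lift may be adjusted on the conjugacy classes of $S_{g,n}$'' --- is not a legitimate operation: homogeneous quasi-morphisms are rigid, and one cannot modify their values on prescribed conjugacy classes while remaining a homogeneous quasi-morphism.

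The repair is short, and it is what the paper implicitly does. Impose the vanishing conditions \emph{upstairs}, where they are honest linear functionals: let $V=\operatorname{span}\{\widehat{\varphi}_k\}\subset Q(\mathcal{MCG}_g^n)$ with the restrictions $\widehat{\varphi}_k|_{\B B_n}$ linearly independent, so that restriction to $\B B_n$ is injective on $V$. The subspace $W=\{\widehat{\varphi}\in V \,:\, \widehat{\varphi}(s)=0 \text{ for all } s\in S_{g,n}\}$ has codimension at most $\#S_{g,n}$ in $V$, hence is infinite dimensional, and injectivity of the restriction on $V$ guarantees that the restrictions of elements of $W$ to $\B B_n$ remain linearly independent --- which is exactly the point your functional-on-$\mathcal{V}$ device was meant to secure. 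Finally, note that elements of $Q_{\mathcal{MCG}_g^n}(\B B_n,S_{g,n})$ are quasi-morphisms on $\B B_n(\B\Sigma_g)$, so the elements you exhibit should be the restrictions $\widehat{\varphi}|_{\B B_n(\B\Sigma_g)}$ for $\widehat{\varphi}\in W$, not the restrictions to $\B B_n$ alone.
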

Note that there are no non-trivial homomorphisms in $Q_{\mathcal{MCG}_g^n}(\B B_n, S_{g,n})$.

\begin{thm}\label{T:inf-diam}
For each $g>1$ the metric space $(\Ham(\B\Sigma_g), \B d_{\Aut})$ has an infinite diameter.
\end{thm}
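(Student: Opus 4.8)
The plan is to produce a single non-zero homogeneous quasi-morphism $\mu$ on $\Ham(\B\Sigma_g)$ that \emph{vanishes on every autonomous diffeomorphism}, and then to invoke the standard comparison between a quasi-morphism and a conjugation-invariant word norm. Concretely, once we know $\mu$ vanishes on autonomous diffeomorphisms, then for $f=h_1\circ\cdots\circ h_m$ with each $h_i$ autonomous and $m=\|f\|_{\Aut}$, the defect inequality for the $m$-fold product gives
\[
|\mu(f)|\le\sum_{i=1}^m|\mu(h_i)|+(m-1)D_\mu=(m-1)D_\mu,
\]
so that $\|f\|_{\Aut}\ge 1+|\mu(f)|/D_\mu$. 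A non-zero homogeneous quasi-morphism is automatically unbounded, hence $\sup_f\|f\|_{\Aut}=\infty$ and $(\Ham(\B\Sigma_g),\B d_{\Aut})$ has infinite diameter.

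To construct $\mu$, fix $n>2$. By Corollary \ref{cor:imp-inf-dim-cor} the space $Q_{\mathcal{MCG}_g^n}(\B B_n,S_{g,n})$ is infinite dimensional, so I may pick a non-zero element of the form $\varphi:=\widehat\varphi|_{\B B_n(\B\Sigma_g)}$, where $\widehat\varphi\in Q(\mathcal{MCG}_g^n)$ vanishes on the finite set $S_{g,n}$; this is legitimate, since such restrictions lie in $Q_{\mathcal{MCG}_g^n}(\B B_n,S_{g,n})$ and the independence statement of Corollary \ref{cor:imp-inf-dim-cor} on $\B B_n$ guarantees non-zero choices. I then set $\mu:=\G_{n,g}(\varphi)\in Q(\Ham(\B\Sigma_g))$. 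Because the space $Q_{\mathcal{MCG}_g^n}(\B B_n,S_{g,n})$ contains no non-trivial homomorphisms, $\varphi$ is a genuine quasi-morphism, and Theorem \ref{T:prop-GG-map} together with Corollary \ref{cor:inf-dim} show that $\mu$ is a non-trivial homogeneous quasi-morphism on $\Ham(\B\Sigma_g)$; in particular it is unbounded, supplying the element with $\mu(f)\neq0$ used above.

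The crux is the vanishing of $\mu$ on an autonomous $h\in\Ham(\B\Sigma_g)$ generated by some $H\colon\B\Sigma_g\to\B R$. For almost every $x\in\mathrm{Gen}_H$, Proposition \ref{P:gen-finite-conj-classes} and the decomposition \eqref{eq:braid-gen-aut} write
\[
\g(h^p,x)=\a'_{p,x}\circ\b_{1,x,p}\circ\cdots\circ\b_{n,x,p}\circ\a''_{p,x},
\]
where $\a'_{p,x},\a''_{p,x}$ have word length at most $M(n)$ independent of $p$, and each $\b_{i,x,p}$ is conjugate in $\mathcal{MCG}_g^n$ to $s^{k_p}$ for some $s\in S_{g,n}$. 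Since $\widehat\varphi$ is a homogeneous, and therefore conjugation-invariant, quasi-morphism on $\mathcal{MCG}_g^n$ vanishing on $S_{g,n}$, I get $\widehat\varphi(\b_{i,x,p})=\widehat\varphi(s^{k_p})=k_p\,\widehat\varphi(s)=0$ for all $i$ and all $p$; this is exactly where homogeneity absorbs the arbitrarily large exponents $k_p$. As $\varphi$ agrees with $\widehat\varphi$ on $\B B_n(\B\Sigma_g)$, applying the defect inequality to the $(n+2)$-fold product and bounding $\widehat\varphi$ on the length-$\le M(n)$ braids $\a'_{p,x},\a''_{p,x}$ yields a constant $C_0=C_0(n,\widehat\varphi)$, \emph{independent of $p$}, with $|\varphi(\g(h^p,x))|\le C_0$ for almost every $x$. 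Integrating over $\X_n(\B\Sigma_g)$ gives $|\Phi_n(h^p)|\le C_0\cdot\vol(\X_n(\B\Sigma_g))$ uniformly in $p$, and therefore
\[
\mu(h)=\overline\Phi_n(h)=\lim_{p\to+\infty}\frac{\Phi_n(h^p)}{p}=0.
\]

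I expect the main obstacle to be precisely this uniform-in-$p$ control: one must confirm that the only part of $\g(h^p,x)$ growing with $p$ is carried by the commuting braids $\b_{i,x,p}$ and is annihilated by $\widehat\varphi$, which hinges on passing from $\varphi$ to its conjugation-invariant extension $\widehat\varphi$ on $\mathcal{MCG}_g^n$ and on the finiteness of $S_{g,n}$ from Proposition \ref{P:finite-conj-classes}. The remaining ingredients — the defect estimates, the length bound on the correction braids $\a'_{p,x},\a''_{p,x}$, and the concluding word-norm inequality — are routine.
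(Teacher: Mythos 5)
Your proposal is correct and takes essentially the same route as the paper: the paper likewise reduces the theorem to producing a non-trivial homogeneous quasi-morphism that vanishes on autonomous diffeomorphisms (Lemma \ref{L:Aut-zero} together with Remark \ref{R:unbounded}), and your vanishing argument via the decomposition \eqref{eq:braid-gen-aut}, Proposition \ref{P:gen-finite-conj-classes}, and the conjugation-invariance and homogeneity of the extension $\widehat\varphi$ on $\mathcal{MCG}_g^n$ is precisely the paper's ``second proof'' of that lemma. The only cosmetic difference is that you keep a bound $|\varphi(\g(h^p,x))|\le C_0$ uniform in $p$ instead of dividing the commuting-braid terms by $p$, which changes nothing.
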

\begin{proof}
Recall that $\OP{Aut}\subset \Ham(\B\Sigma_g)$ is the set of all autonomous diffeomorphisms of $\B\Sigma_g$. Denote by
$Q(\Ham(\B\Sigma_g), \OP{Aut})$ the space of homogeneous quasi-morphisms on $\Ham(\B\Sigma_g)$ that vanish on the set $\OP{Aut}$. Since the set $\OP{Aut}$ generates $\Ham(\B\Sigma_g)$, there are no non-trivial homomorphisms in $Q(\Ham(\B\Sigma_g), \OP{Aut})$.

\begin{lem}\label{L:Aut-zero}
Let $g>1$ and $n>2$. Then for each $\varphi\in Q_{\mathcal{MCG}_g^n}(\B B_n,S_{g,n})$ and
an autonomous diffeomorphism $h\in\Ham(\B\Sigma_g)$ we have $$\G_{n,g}(\varphi)(h):=\overline{\Phi}_n(h)=0.$$
\end{lem}
\begin{proof}
There are two possible proofs of this lemma.

\textbf{First proof.} Note that the set of Morse functions on $\Sigma_g$ is $C^1$-dense in the set of smooth functions on $\Sigma_g$, see \cite{Mil}. Denote the set of Morse functions from $\B\Sigma_g$ to $\B R$ by $\cH_g$. Arguing similarly as in the proof of \cite[Theorem 3.4]{BK2} one can prove the following

\begin{thm}\label{T:Morse-Ham}
Let $H\colon\B\Sigma_g\to\B R$ be a Hamiltonian function and $\{H_k\}_{k=1}^\infty$ be a family of functions so that
each $H_k\in\cH_g$ and $H_k\xrightarrow[k\rightarrow\infty]{}~H$ in
$C^1$-topology. Let $h_1$ and $h_{1,k}$ be the time-one maps of the Hamiltonian flows
generated by $H$ and $H_k$ respectively. Then
$$
\lim\limits_{k\to\infty}\overline{\Phi}_n(h_{1,k})=\overline{\Phi}_n(h_1),
$$
where $\overline{\Phi}_n\colon\Ham(\B\Sigma_g)\to\B R$ is a quasi-morphism defined in ~\eqref{eq:GG-ext}.
\end{thm}

Then, by Theorem \ref{T:Morse-Ham}, it is enough to prove this lemma for $h_1$ generated by some Morse function $H$. For almost every
$x\in\X_n(\B \Sigma_g)$ we have
\begin{equation*}
\g(h_1^p,x)=\a'_{p,x}\circ\b_{1,x}^{k_{h_1,p,1}}\circ\ldots\circ
\b_{n,x}^{k_{h_1,p,n}}\circ\a''_{p,x}\thinspace,
\end{equation*}
see \eqref{eq:braid-morse-aut}. Since all $\b_{i,x}$'s commute we have
$$|\varphi(\g(h_1^p,x))|\leq 2 D_\varphi+|\varphi(\a'_{p,x})|+|\varphi(\a''_{p,x})|+\sum_{i=1}^n |k_{h_1,p,i}|\cdot|\varphi(\b_{i,x})|\thinspace .$$
By Proposition \ref{P:finite-conj-classes} each $\b_{i,x}$ is conjugate to some element in $S_{g,n}$, hence $\varphi(\b_{i,x})=0$. Since the length of braids $\a'_{p,x}$ and $\a''_{p,x}$ is bounded by $M(n)$, there exists a constant $M_1(n)$ such that $|\varphi(\a'_{p,x})|<M_1(n)$ and $|\varphi(\a''_{p,x})|<M_1(n)$.
It follows that
\begin{equation}\label{eq:morse-aut-ineq}
|\varphi(\g(h_1^p,x))|\leq 2 (D_\varphi+M_1(n)).
\end{equation}
This yields
$$|\overline{\Phi}_n(h_1)|\leq\lim_{p\to\infty}\int\limits_{\X_n(\B\Sigma_g)}\frac{1}{p}|\varphi(\g(h_1^p,x))|dx=0,$$
where the equality follows immediately from \eqref{eq:morse-aut-ineq}, and the proof follows.

\textbf{Second proof.} This proof does not require any continuity properties of quasi-morphisms. Recall that for $x=(x_1,\ldots,x_n)\in\mathrm{Gen}_H$ we have the following decomposition
$$
\g(h_1^p,x)=\a'_{p,x}\circ\b_{1,x,p}\circ\ldots\circ
\b_{n,x,p}\circ\a''_{p,x}\thinspace,
$$
see ~\eqref{eq:braid-gen-aut}. Since all $\b_{i,x,p}$'s commute we have
$$|\varphi(\g(h_1^p,x))|\leq 2 D_\varphi+|\varphi(\a'_{p,x})|+|\varphi(\a''_{p,x})|+\sum_{i=1}^n |\varphi(\b_{i,x,p})|\thinspace .$$

We showed that if $x_i$ belongs to a critical level set of $H$, then the length of $\b_{i,x,p}$ is uniformly bounded by some $M'(n)$. If $x_i$ belongs to a regular level set, then by Proposition \ref{P:gen-finite-conj-classes}, the braid $\b_{i,x,p}$ is conjugate to some $s^{k_p}$, where $s\in S_{g,n}$. Since $\varphi\in Q_{\mathcal{MCG}_g^n}(\B B_n,S_{g,n})$, then in both cases
$$\lim\limits_{p\to\infty}\frac{\varphi(\b_{i,x,p})}{p}=0.$$
Recall that $|\varphi(\a'_{p,x})|<M_1(n)$ and $|\varphi(\a''_{p,x})|<M_1(n)$.
It follows that
$$
\frac{|\varphi(\g(h_1^p,x))|}{p}\leq \frac{2 (D_\varphi+M_1(n))+\sum_{i=1}^n |\varphi(\b_{i,x,p})|}{p}\thinspace.
$$
This yields
$$|\overline{\Phi}_n(h_1)|\leq\lim_{p\to\infty}\int\limits_{\X_n(\B\Sigma_g)}\frac{1}{p}|\varphi(\g(h_1^p,x))|d\overline{x}=0,$$
and the proof follows.
\end{proof}

It follows from Corollary \ref{cor:inf-dim} and from Lemma \ref{L:Aut-zero} that
$$\G_{3,g}\colon Q_{\mathcal{MCG}_g^3}(\B B_3,S_{g,3})\hookrightarrow Q(\Ham(\B\Sigma_g), \OP{Aut})\thinspace .$$
By Corollary \ref{cor:imp-inf-dim-cor} we have
$$\dim(Q_{\mathcal{MCG}_g^3}(\B B_3,S_{g,3}))=\infty,$$
hence
\begin{equation}\label{eq:inf-dim}
\dim(Q(\Ham(\B\Sigma_g), \OP{Aut}))=\infty\thinspace .
\end{equation}

\begin{rem}\label{R:unbounded}
The existence of a genuine (that is not a homomorphisms)
homogeneous quasi-morph\-ism $\psi\colon \Ham(\B\Sigma_g)\to \B R$ that is trivial on the set
$\OP{Aut}\subset \Ham(\B\Sigma_g)$ implies that the autonomous norm is unbounded. Indeed,
for every $f\in \Ham(\B\Sigma_g)$
we have that
$$|\psi(f)|=|\psi(h_1\circ\ldots\circ h_m)|\leq mD_\psi$$
and hence for every
natural number $n$ we get $\|f^n\|_{\OP{Aut}}\geq \frac{|\psi(f)|}{D_\psi}n>0$,
provided $\psi(f)\neq 0$.
\end{rem}
Remark \ref{R:unbounded} concludes the proof of the theorem.
\end{proof}

\begin{rem}\label{R:proof-without-braids}
In order to show that for each $g>1$ the metric space $(\Ham(\B\Sigma_g), \B d_{\Aut})$ has an infinite diameter it is enough to consider the case when $n=1$, i.e. the space $\widehat{Q}(\pi_1(\B\Sigma_g))$, and the generalized Gambaudo-Ghys construction is not needed. For more details see \cite{B-autonomous}. However, we will see that the proof of Theorem \ref{T:main} requires the generalized Gambaudo-Ghys construction.
\end{rem}

Let us proceed with the proof of Theorem \ref{T:main}. Choose $r<\frac{1}{m}$. There exists a family $\{\varphi_i\}_{i=1}^m$ of linearly independent homogeneous quasi-morphisms in $Q_{\mathcal{MCG}_g^3}(\B B_3,S_{g,3})$, and a family of diffeomorphisms $\{h_j\}_{j=1}^m$ in $\Ham(\B\Sigma_g)$, where the support of each $h_j$ is contained in some disc of area $r$, such that
\begin{equation}\label{eq:delta-ij}
\overline{\Phi}_i(h_j):=\G_{3,g}(\varphi_i)(h_j)=\delta_{ij},
\end{equation}
where $\delta_{ij}$ is the Kronecker delta. This follows from Theorem \ref{T:prop-GG-map} combined with \eqref{eq:inf-dim} and with Lemma 3.10 and Lemma 3.11 in \cite{BK2}.
Since $r<\frac{1}{m}\thinspace$, there exists a family of diffeomorphisms
$\{f_j\}_{j=1}^m$ in $\B \Ham(\B \Sigma_g)$, such that $f_j\circ h_j\circ f_j^{-1}$ and
$f_i\circ h_i\circ f_i^{-1}$ have disjoint supports for all different $i$ and
$j$ between $1$ and $m$. Denote by $g_i:=f_i\circ h_i\circ f_i^{-1}$. Since all $g_i$'s have disjoint supports, they generate a free abelian group of rank $m$. Let
$$
T\colon\B Z^m\to\Ham(\B\Sigma_g)
$$
where $T(d_1,\ldots,d_m)=g_1^{d_1}\circ\ldots\circ g_m^{d_m}$.
It is obvious that $T$ is a monomorphism. In
order to complete the proof of the theorem it is left to show that $T$ is a
bi-Lipschitz map, i.e. we are going to show that there exists a constant
$A\geq 1$ such that
$$
A^{-1}\sum_{i=1}^m |d_i|\leq\|g_1^{d_1}\circ\ldots\circ g_m^{d_m}\|_{\OP{Aut}}
\leq A\sum_{i=1}^m |d_i|\thinspace.
$$

We have the following equalities
$$
\overline{\Phi}_i(g_j)
=\overline{\Phi}_i(f_j\circ h_j\circ f_j^{-1})
=\overline{\Phi}_i(h_j)
=\delta_{ij}\thinspace.
$$
The second equality follows from the fact that every homogeneous quasi-morphism
is invariant under conjugation, and the third equality is \eqref{eq:delta-ij}.
Since all $g_i$
commute and $\overline{\Phi}_i(g_j)=\delta_{ij}$, we obtain
$$
\|g_1^{d_1}\circ\ldots\circ g_m^{d_m}\|_{\OP{Aut}}
\geq \frac{|\overline{\Phi}_i(g_1^{d_1}\circ\ldots\circ
g_m^{d_m})|}{D_{\overline{\Phi}_i}}
=\frac{|d_i|}{D_{\overline{\Phi}_i}}\thinspace,
$$
where
$D_{\overline{\Phi}_i}$ is the defect of the quasi-morphism
$\overline{\Phi}_i$. The defect $D_{\overline{\Phi}_i}\neq~0$ because
each $\overline{\Phi}_i\in Q(\Ham(\B\Sigma_g), \OP{Aut})$.
Let $\mathfrak{D}_m:=\max\limits_i D_{\overline{\Phi}_i}$. We obtain the following inequality
\begin{equation}\label{eq:L-norms}
\|g_1^{d_1}\circ\ldots\circ g_m^{d_m}\|_{\OP{Aut}}
\geq(m\cdot\mathfrak{D}_m)^{-1}\sum_{i=1}^m |d_i|\thinspace .
\end{equation}
Denote by $\mathfrak{M}_m:=\max\limits_i\|g_i\|_{\OP{Aut}}$. Now we
have the following inequality
\begin{equation}\label{eq:easy-ineq}
\|g_1^{d_1}\circ\ldots\circ g_m^{d_m}\|_{\OP{Aut}}
\leq \sum_{i=1}^m |d_i|\cdot\|f_i\|_{\OP{Aut}}
\leq \mathfrak{M}_m\cdot \sum_{i=1}^m |d_i|\thinspace .
\end{equation}
Inequalities \eqref{eq:L-norms} and \eqref{eq:easy-ineq} conclude the proof of the theorem.
\qed

\begin{rem}
Recall that $\B\Sigma_{g,k}$ is a compact orientable surface of genus $g$ with $k$ boundary components. Similarly to the case of closed surfaces the group $\Ham(\B\Sigma_{g,k})$ admits a bi-invariant autonomous metric. The proof presented above, shows that for every $g>1$ Theorem \ref{T:main} holds for a group $\Ham(\B\Sigma_{g,k})$ as well.
\end{rem}

\subsection{Proof of Theorem \ref{T:frag-metric}}
\label{sec-proof-frag}

At the beginning let us recall a construction, due to L. Polterovich, of quasi-morphisms on $\Ham(\B M)$.

Let $z\in \B M$. Suppose that the group $\pi_1(\B M,z)$ admits a \emph{non-trivial} homogeneous quasi-morphism $\phi\colon\pi_1(\B M,z)\to\B R$. For each $x\in\B M$ let us choose an arbitrary geodesic path from $x$ to $z$. In \cite{P} Polterovich constructed the induced \emph{non-trivial} homogeneous quasi-morphism $\widetilde{\Phi}$ on $\Ham(\B M)$ as follows:\\
For each $x\in\B M$ and an isotopy $\{h_t\}_{t\in[0,1]}$ between $\Id$ and $h$, let $g_x$ be a closed loop in $\B M$ which is a concatenation of a geodesic path from $z$ to $x$, the path $h_t(x)$ and a described above geodesic path from $h(x)$ to~$z$. Denote by $[h_x]$ the corresponding element in $\pi_1(\B M,z)$ and set
$$\Psi(h):=\int\limits_{\B M} \phi([h_x])\o^k\qquad\qquad
\overline{\Psi}(h):=\lim\limits_{p\to\infty}\frac{1}{p}\int\limits_{\B M} \phi([(h^p)_x])\o^k,$$
where $\dim(\B M)=2k$. The maps $\Psi$ and $\overline{\Psi}$ are well-defined quasi-morphisms because every diffeomorphism in $\Ham(\B M)$ is volume-preser\-ving. They do not depend on a choice of the path $\{h_t\}$ because the evaluation map $\ev\colon\Ham(\B M)\to \B M$, where $\ev(h)=h(z)$, induces a trivial map on $\pi_1(\Ham(\B M),\Id)$, see \cite{McDuff}. In addition, the quasi-morphism $\overline{\Psi}$ neither depends on the choice of a family of geodesic paths, nor on the choice of a base point $z$. For more details see \cite{P}. Note that if $\B M=\B\Sigma_g$ and $g>1$, then this is exactly the generalized Gambaudo-Ghys construction for $n=1$, i.e. $\B B_1(\B\Sigma_g)=\pi_1(\B\Sigma_g)$ and $\overline{\Psi}=\overline{\Phi}_1$. An immediate corollary from this construction is
\begin{cor}\label{cor:Polt-qm}
Let $\overline{\Psi}$ be a homogeneous quasi-morphism on $\Ham(\B M)$ given by Polterovich construction, then $\overline{\Psi}(h)=0$ for any $h\in\Ham(\B M)$ which is supported in a ball.
\end{cor}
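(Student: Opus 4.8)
The plan is to use the two independence properties of $\overline{\Psi}$ recorded above---independence of the isotopy $\{h_t\}$ and of the family of geodesic paths---to reduce every loop occurring in the definition of $\overline{\Psi}(h)$ to a null-homotopic one. Assume $h$ is supported in a ball $\B B\subset\B M$. Since $\overline{\Psi}$ does not depend on the isotopy joining $\Id$ to $h$, I would first choose a Hamiltonian isotopy $\{h_t\}$ whose support lies in $\B B$; such an isotopy exists because $h$ is generated by a Hamiltonian supported in $\B B$. I then fix an interior point $w\in\B B$ together with a path $\sigma$ from the basepoint $z$ to $w$, and---using independence of the geodesic family---declare, for every $x\in\B B$, the path from $z$ to $x$ to be $\sigma$ followed by a path $\eta_x$ from $w$ to $x$ lying entirely in $\B B$; for all $x$ I take the return path from $x$ to $z$ to be the reverse of the chosen outgoing path.

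I would then split the integral according to whether $x$ lies in $\B B$. For $x\notin\overline{\B B}$ the isotopy fixes $x$, so for every $p$ the orbit segment of $x$ is constant and the loop $[(h^p)_x]$ is a path followed by its reverse, hence trivial in $\pi_1(\B M,z)$. For $x\in\B B$ the support condition forces the whole orbit of $x$, and in particular $h^p(x)$, to remain in $\B B$; with the adapted path family the loop $[(h^p)_x]$ is $\sigma$, then the loop based at $w$ given by $\eta_x$, the orbit segment from $x$ to $h^p(x)$, and the reverse of $\eta_{h^p(x)}$---a loop contained in $\B B$---and finally the reverse of $\sigma$. As $\B B$ is a ball, this inner loop contracts inside $\B B$, so $[(h^p)_x]$ is again trivial. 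Because a homogeneous quasi-morphism vanishes on the identity, $\phi([(h^p)_x])=0$ in both cases.

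Since $\partial\B B$ has measure zero, these cases exhaust almost every $x$, so $\int_{\B M}\phi([(h^p)_x])\,\o^k=0$ for every $p$ and therefore $\overline{\Psi}(h)=\lim_{p\to\infty}\frac1p\int_{\B M}\phi([(h^p)_x])\,\o^k=0$, which is even stronger than required. The one step deserving care---and the main obstacle---is the legitimacy of simultaneously fixing the isotopy inside $\B B$ and replacing the geodesic paths by the adapted family; this is exactly the stated independence of $\overline{\Psi}$ from the isotopy and from the geodesic family. Should one prefer to keep the original geodesics, I would instead set $a_x:=[s_{zx}\cdot\bar\eta_x\cdot\bar\sigma]$, verify the identity $[(h^p)_x]=a_x\cdot a_{h^p(x)}^{-1}$ using that $\eta_x$, the orbit segment, and $\bar\eta_{h^p(x)}$ bound a disc in $\B B$, and then bound $|\phi(a_x)|$ uniformly for $x\in\B B$ by the defect of $\phi$ times the bounded length of the geodesics into $\B B$; this yields a uniform bound on $|\phi([(h^p)_x])|$ and hence $\overline{\Psi}(h)=0$ after dividing by $p$.
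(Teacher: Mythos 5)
Your proof is correct, and it is exactly the argument the paper leaves implicit: the corollary is stated as ``immediate from the construction,'' the reason being precisely that points outside the ball are fixed while orbits of points inside trace loops contractible in the ball, so every class $[(h^p)_x]$ is trivial up to uniformly bounded correction terms coming from the choice of paths, and homogenization kills those. Your fallback version with the classes $a_x=[s_{zx}\cdot\bar\eta_x\cdot\bar\sigma]$ and the identity $[(h^p)_x]=a_x\cdot a_{h^p(x)}^{-1}$ is the right way to make the path-replacement step rigorous, since the stated independence is only for families of geodesic paths.
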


We start proving the case \textbf{1}, where $\B M=\B\Sigma_g$ and $g>1$. There exist $2g-2$ different points $z_1,\ldots,z_{2g-2}$ and $2g-2$ disjoint embeddings $\emb_i$ of figure eight into $\B\Sigma_g$ such that:
\begin{itemize}
\item
an image of a singular point of figure eight under $\emb_i$ equals to some $z_i$,
\item
each $\emb_i$ induces an injective map $(\emb_i)_*\colon\B F_2\hookrightarrow\pi_1(\B \Sigma_g, z_i)$,
\item
the image of the induced map $Q(\pi_1(\B \Sigma_g, z_i))\to Q(\B F_2)$ is infinite dimensional for each $i$.
\end{itemize}
These embeddings are shown in Figure \ref{fig:fig8-emb}.
%%%%%%%%%%%%%%%%%%%%%%%%%%%%%%%%%%%%%%%%%%%%%%%%%%%%%%%%%%%%%%%%%%%%%%%
\begin{figure}[htb]
\centerline{\includegraphics[height=1.7in]{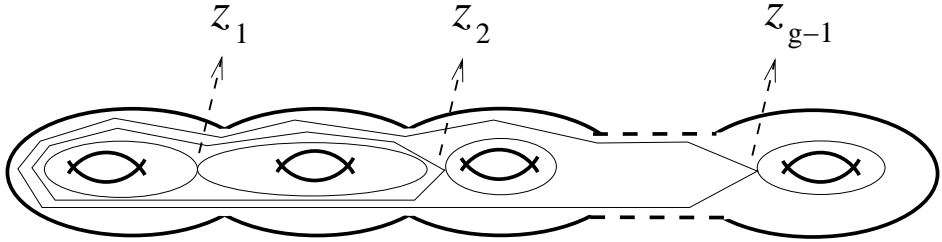}}
\caption{\label{fig:fig8-emb} Images of embeddings $\emb_1,\ldots\emb_{g-1}$ of figure eight are shown in this picture. The image of $\emb_{i+g-1}$ is a reflection of the image of $\emb_i$ over the horizontal plane for each $1\leq i\leq g-1$.}
\end{figure}
%%%%%%%%%%%%%%%%%%%%%%%%%%%%%%%%%%%%%%%%%%%%%%%%%%%%%%%%%%%%%%%%%%%%%

Let $a,b$ denote generators of $\B F_2$. Since every $\pi_1(\B \Sigma_g, z_i)$ is conjugate to every
$\pi_1(\B \Sigma_g, z_j)$, then by \cite[Lemma 3.10]{BK2} there exists a family of quasi-morphisms
$\{\phi_i\}_{i=1}^{2g-2}\in \widehat{Q}(\pi_1(\B \Sigma_g, z_i))$ and $[\g_i]\in\pi_1(\B\Sigma_g,z_i)$ such that:
\begin{itemize}
\item
each $[\g_i]$ is an image of some word in generators $a,b$ under the homomorphism $(\emb_i)_*\colon\B F_2\hookrightarrow\pi_1(\B \Sigma_g, z_i)$,
\item
we have $\phi_i([\g_j])=\delta_{ij}$ for $1\leq i,j\leq 2g-2$, where $\delta_{ij}$ is a Kronecker delta,
\item
and $\phi_i((\emb_i)_*(a))=\phi_i((\emb_i)_*(b))=0$ for $1\leq i\leq 2g-2$.
\end{itemize}
Let $\a_i$ and $\b_i$ be simple closed oriented curves in $\B\Sigma_g$ which represent $(\emb_i)_*(a)$ and $(\emb_i)_*(b)$ respectively. Denote by $w([\a_i],[\b_i])$ a reduced word in $[\a_i]$ and $[\b_i]$ which represents $[\g_i]$. Let $\varepsilon>0$ such that any $(2g-2)\times(2g-2)$ matrix $A$, where $1-\varepsilon<a_{ii}<1+\varepsilon$ and $|a_{ij}|<\varepsilon$ ($i\neq j$) is non-singular. In what follows we are going to construct, for each $1\leq i\leq 2g-2$, autonomous Hamiltonian flows $h_{t,i},g_{t,i}\in\Ham(\B\Sigma_g)$ such that

\begin{itemize}
\item
for each $i\neq j$ the support of the flow $h_{t,i}$ is disjoint from the supports of flows $h_{t,j}$ and $g_{t,j}$, and the support of the flow $g_{t,i}$ is disjoint from the supports of flows $h_{t,j}$ and $g_{t,j}$.
\item
We have $|\overline{\Psi}_i(f_i)-1|<\varepsilon$, where $\overline{\Psi}_i$ is a homogeneous quasi-morphism on $\Ham(\B\Sigma_g)$ induced by a quasi-morphism $\phi_i$, and $f_i$ is a Hamiltonian diffeomorphism $w(h_{1,i},g_{1,i})$, i.e. it is a word $w$ in diffeomorphisms $h_{1,i}$ and $g_{1,i}$.
\item We have $|\overline{\Psi}_i(f_j)|<\varepsilon$ for $i\neq j$.
\end{itemize}

\subsubsection{Construction of the flows $h_{t,i},g_{t,i}\in\Ham(\B\Sigma_g)$.} It follows from the tubular neighborhood theorem that for each $\a_i$ and $\b_i$ there exists $r_{\a},r_{\b}>0$, and area-preserving embeddings
$$
\OP{Emb}_{\a_i}:(0,r_{\a})\times \B S^1\hookrightarrow \B \Sigma_g \qquad \OP{Emb}_{\b_i}:(0,r_{\b})\times \B S^1\hookrightarrow \B \Sigma_g
$$
such that $\OP{Emb}_{\a_i}|_{\left\{\frac{r_{\a}}{2}\right\}\times \B S^1}$ is the curve $\a_i$, $\OP{Emb}_{\b_i}|_{\left\{\frac{r_{\b}}{2}\right\}\times \B S^1}$ is the curve $\b_i$. The area form on the annuli $(0,r_{\a})\times \B S^1$ and $(0,r_{\b})\times \B S^1$ is the standard Euclidean area. In addition, we require that for each $i\neq j$ the image of $\OP{Emb}_{\a_i}$ is disjoint from the images of $\OP{Emb}_{\a_j}$ and $\OP{Emb}_{\b_j}$, and the image of $\OP{Emb}_{\b_i}$ is disjoint from the images of $\OP{Emb}_{\a_j}$ and $\OP{Emb}_{\b_j}$.

It is straightforward to construct two Hamiltonian isotopies of autonomous Hamiltonian diffeomorphisms
$$
h_t:(0,r_{\a})\times \B S^1\to (0,r_{\a})\times \B S^1\qquad g_t:(0,r_{\b})\times \B S^1\to (0,r_{\b})\times \B S^1,
$$
where $h_0=\OP{Id}=g_0$ such that:
\begin{itemize}
\item
For each $t\in \B R$ the diffeomorphism $h_t$ equals to the identity
in the neighborhood of $\{0,r_{\a}\}\times \B S^1$, the diffeomorphism $g_t$ equals to the identity
in the neighborhood of $\{0,r_{\b}\}\times \B S^1$.

\item
Each diffeomorphism $h_t$ preserves the foliation of
$(0,r_{\a})\times \B S^1$ by the circles $\{x\}\times \B S^1$, and $\frac{\partial h_t}{\partial t}$ is constant on each circle. Let $0\leq r'<r''\leq r_{\a},r_{\b}$. For
every $x\in (r',r'')$ the restriction
$h_t\colon \{x\}\times \B S^1\to \{x\}\times \B S^1$ is the rotation
by $2\pi\,t$. It follows that the time-one map $h_1$ equals the
identity on $(r',r'')\times \B S^1$.

\item
Each diffeomorphism $g_t$ preserves the foliation of
$(0,r_{\b})\times \B S^1$ by the circles $\{x\}\times \B S^1$, and $\frac{\partial g_t}{\partial t}$ is constant on each circle. For
every $x\in (r',r'')$ the restriction
$g_t\colon \{x\}\times \B S^1\to \{x\}\times \B S^1$ is the rotation
by $2\pi\,t$. It follows that the time-one map $g_1$ equals the
identity on $(r',r'')\times \B S^1$.

\item
Each diffeomorphism $h_t$ preserves the orientation on $(r'', r_{\a})\times~\B S^1$ and $0<\frac{\partial h_t}{\partial t}<2\pi$ in this region.
Each diffeomorphism $g_t$ preserves the orientation on $(r'', r_{\b})\times \B S^1$ and $0<\frac{\partial g_t}{\partial t}<2\pi$ in this region.
\end{itemize}

We identify $(0,r_{\a})\times \B S^1$ with its image
with respect to the embedding $\OP{Emb}_{\a_i}$, and $(0,r_{\b})\times \B S^1$ with its image
with respect to the embedding $\OP{Emb}_{\b_i}$. Then we extend each isotopy $h_t$ and $g_t$ by the identity on
$\B\Sigma_g\setminus (0,r_\a)\times \B S^1$ and on $\B\Sigma_g\setminus (0,r_\b)\times \B S^1$ obtaining smooth isotopies
$h_{t,i},g_{t,i}\in \Ham(\Sigma_g)$. The images of $\OP{Emb}_{\a_i}$ and $\OP{Emb}_{\b_i}$ are shown in Figure \ref{fig:fig8-diff}.

Recall that $f_i:=w(h_{1,i},g_{1,i})$. Let us compute $\overline{\Psi}_i(f_j)$. We have
\begin{eqnarray*}
\overline{\Psi}_i(f_j)&=&\lim\limits_{p\to\infty}\frac{1}{p}\int\limits_{W_{h,j}} \phi_i([w^p(h_{1,j},g_{1,j})_x])\o
+\lim\limits_{p\to\infty}\frac{1}{p}\int\limits_{W_{g,j}} \phi_i([w^p(h_{1,j},g_{1,j})_x])\o\\
&+&\lim\limits_{p\to\infty}\frac{1}{p}\int\limits_{V_{h,j}} \phi_i([w^p(h_{1,j},g_{1,j})_x])\o
+\lim\limits_{p\to\infty}\frac{1}{p}\int\limits_{V_{g,j}} \phi_i([w^p(h_{1,j},g_{1,j})_x])\o\\
&+&\lim\limits_{p\to\infty}\frac{1}{p}\int\limits_{U_{h,j}\setminus (V_{g,j}\cup U_{g,j})} \phi_i([w^p(h_{1,j},g_{1,j})_x])\o\\
&+&\lim\limits_{p\to\infty}\frac{1}{p}\int\limits_{U_{g,j}\setminus (V_{h,j}\cup U_{h,j})} \phi_i([w^p(h_{1,j},g_{1,j})_x])\o\\
&+&\lim\limits_{p\to\infty}\frac{1}{p}\int\limits_{U_{h,j}\cap U_{g,j}} \phi_i([w^p(h_{1,j},g_{1,j})_x])\o\thinspace .
\end{eqnarray*}
Note that if $x\in W_{h,j}$ or $x\in U_{h,j}\setminus (V_{g,j}\cup U_{g,j})$ then $[w^p(h_{1,j},g_{1,j})_x]$ equals to $[\a_j]^p$, hence $$\phi_i([w^p(h_{1,j},g_{1,j})_x])=0.$$
Similarly, if $x\in W_{g,j}$ or $x\in U_{g,j}\setminus (V_{h,j}\cup U_{h,j})$ then $[w^p(h_{1,j},g_{1,j})_x]$ is equals to $[\b_j]^p$, hence $$\phi_i([w^p(h_{1,j},g_{1,j})_x])=0.$$
In addition, if $x\in (U_{h,j}\cap U_{g,j})$ then $[w^p(h_{1,i},g_{1,i})_x]$ equals to $[\g_j]^p$, hence
$$\phi_i([w^p(h_{1,i},g_{1,i})_x])=p\textrm{ and }\phi_i([w^p(h_{1,j},g_{1,j})_x])=0\textrm{ for } i\neq j.$$
It follows that
\begin{eqnarray*}
\overline{\Psi}_i(f_i)&=&\lim\limits_{p\to\infty}\frac{1}{p}\int\limits_{V_{h,i}\cup V_{h,i}} \phi_i([w^p(h_{1,i},g_{1,i})_x])\o
+\area (U_{h,i}\cap U_{g,i})\\
\overline{\Psi}_i(f_j)&=&\lim\limits_{p\to\infty}\frac{1}{p}\int\limits_{V_{h,j}\cup V_{h,j}} \phi_i([w^p(h_{1,j},g_{1,j})_x])\o,
\rm{ where }\thinspace\thinspace \textit{i}\neq \textit{j}\thinspace .
\end{eqnarray*}
%%%%%%%%%%%%%%%%%%%%%%%%%%%%%%%%%%%%%%%%%%%%%%%%%%%%%%%%%%%%%%%%%%%%%%%
\begin{figure}[htb]
\centerline{\includegraphics[height=2.8in]{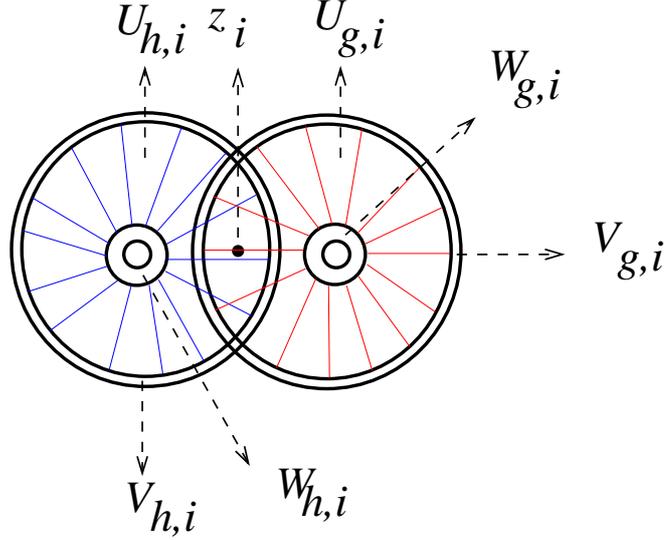}}
\caption{\label{fig:fig8-diff} Images of $\OP{Emb}_{\a_i}$ and $\OP{Emb}_{\b_i}$ are shown on the left and right respectively. The regions $W_{h,i}=\OP{Emb}_{\a_i}(0,r')$, $U_{h,i}=\OP{Emb}_{\a_i}(r',r'')$, $V_{h,i}=\OP{Emb}_{\a_i}(r'', r_\a)$, and similarly $W_{g,i}=\OP{Emb}_{\b_i}(0,r')$, $U_{g,i}=\OP{Emb}_{\b_i}(r',r'')$, $V_{g,i}=\OP{Emb}_{\b_i}(r'', r_\b)$.}
\end{figure}
%%%%%%%%%%%%%%%%%%%%%%%%%%%%%%%%%%%%%%%%%%%%%%%%%%%%%%%%%%%%%%%%%%%%%
For $[\g]\in \Im(\emb_j)_*$ denote by $l_j([\g])$ the length of $[\g]$ in generators $[\a_j],[b_j]$. Since $\phi_i$ is a homogeneous quasi-morphism and in addition $\phi_i([\a_j])=\phi_i([\b_j])=0$, we have
$$|\phi_i([\g])|\leq D_{\phi_i}l_j([\g]).$$

Let $x\in V_{h,j}\cup V_{h,j}$, then from the construction of $h_{t,j},g_{t,j}$ we have
$$l_j([w^p(h_{1,j},g_{1,j})_x])\leq p\cdot l_j([\g_j]).$$
Denote by $L:=\max\limits_j{l_j([\g_j])}$, $A_V:=\max\limits_j{\area(V_{h,j}\cup V_{h,j})}$, $\mathfrak{D}_\phi:=\max\limits_j{D_{\phi_j}}$.  It follows that
\begin{equation*}
|\overline{\Psi}_i(f_i)-\area (U_{h,i}\cap U_{g,i})|\leq A_V\cdot L\cdot\mathfrak{D}_\phi\quad\textrm{and}\quad
|\overline{\Psi}_i(f_j)|\leq A_V\cdot L\cdot\mathfrak{D}_\phi\thinspace .
\end{equation*}
Note that we can choose positive numbers $r_\a,r_\b,r',r''$ and $A>0$ such that $A=\area (U_{h,i}\cap U_{g,i})$ for each $i$, and
$A_V\cdot L\cdot\mathfrak{D}_\phi\leq\varepsilon A$. Now, quasi-morphisms $\frac{\overline{\Psi}_i}{A}$ satisfy the required properties.

\subsubsection{Continuation of the proof.}
\label{subsec-proof-cont}
It follows from the properties of $h_{t,i}$ and $g_{t,i}$ that diffeomorphisms $f_i$ and $f_j$ commute for $1\leq i,j\leq 2g-2$, and the $(2g-2)\times(2g-2)$ matrix, whose $ij$-th entry is $\overline{\Psi}_i(f_j)$, is non-singular. Hence there exists a family $\{\Omega_i\}_{i=1}^{2g-2}$ of genuine homogeneous quasi-morphisms on $\Ham(\B\Sigma_g)$ such that $\Omega_i(f_j)=\delta_{ij}$. Let
$$
I\colon\B Z^{2g-2}\to\Ham(\B\Sigma_g)
$$
where
$$I(d_1,\ldots,d_{2g-2})=f_1^{d_1}\circ\ldots\circ f_{2g-2}^{d_{2g-2}}.$$
It is obvious that $I$ is a monomorphism. In
order to complete the proof of the theorem in case of surfaces, it is left to show that $I$ is a
bi-Lipschitz map. Since all $f_i$ commute and $\Omega_i(f_j)=\delta_{ij}$, we obtain
$$
\|f_1^{d_1}\circ\ldots\circ f_{2g-2}^{d_{2g-2}}\|_{\OP{Frag}}\geq
\frac{|\Omega_i(f_1^{d_1}\circ\ldots\circ f_{2g-2}^{d_{2g-2}})|}{D_{\Omega_i}}
=\frac{|d_i|}{D_{\Omega_i}}\thinspace,
$$
where
$D_{\Omega_i}$ is the defect of the quasi-morphism
$\Omega_i$. The defect $D_{\Omega_i}\neq~0$ because
each $\Omega_i$ is a genuine quasi-morphism.
Let $\mathfrak{D}_\Omega:=\max\limits_i D_{\Omega_i}$. We obtain the following inequality
\begin{equation*}
\|f_1^{d_1}\circ\ldots\circ f_{2g-2}^{d_{2g-2}}\|_{\OP{Frag}}
\geq((2g-2)\cdot\mathfrak{D}_\Omega)^{-1}\sum_{i=1}^{2g-2} |d_i|\thinspace .
\end{equation*}
Denote by $\mathfrak{M}_F:=\max\limits_i\|g_i\|_{\OP{Frag}}$. Now we
have the following inequality
\begin{equation*}
\|f_1^{d_1}\circ\ldots\circ f_{2g-2}^{d_{2g-2}}\|_{\OP{Frag}}
\leq \sum_{i=1}^m |d_i|\cdot\|f_i\|_{\OP{Frag}}
\leq \mathfrak{M}_F\cdot \sum_{i=1}^{2g-2} |d_i|\thinspace .
\end{equation*}
This concludes the proof of the theorem in case of surfaces.

Case \textbf{2.} Let $\dim(\B M)>2$. Let $z_1,\ldots,z_m$ different points in $\B M$. Since the dimension of $\B M$ is greater then two, and there exists a monomorphism $\B F_2\hookrightarrow \pi_1(\B M)$ such that the image of the induced map $\widehat{Q}(\pi_1(\B M))\to \widehat{Q}(\B F_2)$ is infinite dimensional, there exist $m$ disjoint embeddings of figure eight into $\B M$ such that
\begin{itemize}
\item
an image of a singular point of the figure eight under the $i$-th embedding equals to some $z_i$,
\item
the $i$-th embedding induces an injective map $\B F_2\hookrightarrow\pi_1(\B M, z_i)$, and
the image of the induced map $\widehat{Q}(\pi_1(\B M, z_i))\to \widehat{Q}(\B F_2)$ is infinite dimensional.
\end{itemize}

As in case of surfaces we can construct, for each $1\leq i\leq m$, autonomous Hamiltonian flows $h'_{t,i},g'_{t,i}\in\Ham(\B M)$, a diffeomorphism $f'_i$ which can be written as a finite product of diffeomorphisms $h'_{1,i},g'_{1,i}$ and their inverses, and homogeneous quasi-morphism $\overline{\Psi}'_i$ such that
\begin{itemize}
\item
for each $i\neq j$ the support of the flow $h'_{t,i}$ is disjoint from the supports of flows $h'_{t,j}$ and $g'_{t,j}$, and the support of the flow $g'_{t,i}$ is disjoint from the supports of flows $h'_{t,j}$ and $g'_{t,j}$.
\item
We have $|\overline{\Psi}'_i(f'_i)-1|<\varepsilon$ and $|\overline{\Psi}'_i(f'_j)|<\varepsilon$ for $i\neq j$.
\end{itemize}
The construction of these objects is very similar to one in the case of surfaces and is omitted. Now we proceed exactly as in Case \textbf{1}, and the proof follows.
\qed

\subsection{Proof of Theorem \ref{T:Calabi-qm}}
\label{ssec-proof-Calabi-qm}

Let $g>1$ and $n=2$. Recall that
$$\B B_2<\B B_2(\B\Sigma_g)<\mathcal{MCG}_g^2.$$
Note that $\B B_2$ is a cyclic group generated by Dehn twist around two punctures. Since no positive power of the Dehn twist about a simple closed curve, which bounds a disc with 2 punctures, is conjugate in $\mathcal{MCG}_g^2$ to its inverse, then by \cite[Theorem 4.2]{BBF} there exists a homogeneous quasi-morphism in $\widehat{Q}(\mathcal{MCG}_g^2)$ which is non-trivial on $\B B_2$.

The group $\B B_2(\B\Sigma_g)$ contains a non-abelian surface group, see \cite[Corollary 5.1]{FN}, and hence it is not virtually abelian. It is not a reducible subgroup of $\mathcal{MCG}_g^2$, because it is an infinite normal subgroup of $\mathcal{MCG}_g^2$, see \cite[Corollary 7.13]{Ivanov}. It follows from the proof of  \cite[Theorem 12]{BF} that the space of homogeneous quasi-morphisms on $\mathcal{MCG}_g^2$ that remain linearly independent when restricted to $\B B_2(\B\Sigma_g)$ is also infinite dimensional. Hence there are infinitely many linearly independent homogeneous quasi-morphisms on $\B B_2(\B\Sigma_g)$ which remain non-trivial when restricted to $\B B_2$. Denote this set by $\textbf{QCal}_g$. It is infinite and it does not contain non-trivial homomorphisms, since every homomorphism on $\B B_2(\B\Sigma_g)$ must be trivial on $\B B_2$, see \cite{Bel}.

Let $\varphi\in \textbf{QCal}_g$ such that $\varphi(\g)=1$, where $\g$ is a positive generator of $\B B_2$. Take a diffeomorphism
$h\in\Ham(\B \Sigma_g)$ which is supported in some disc $\B D$. It follows from the generalized Gambaudo-Ghys construction that
$$\G_{2,g}(\varphi)(h)=\C_{\B D}(h),$$
where $\C_{\B D}\colon\Ham(\B D)\to\B R$ is the Calabi homomorphism defined in \eqref{eq:Calabi-disc}. The map
$$\G_{2,g}\colon \widehat{Q}(\B B_2(\B\Sigma_g))\to \widehat{Q}(\Ham(\B \Sigma_g))$$
is injective by Corollary \ref{cor:inf-dim}. This fact together with the fact that the set $\textbf{QCal}_g\subset\widehat{Q}(\B B_2(\B\Sigma_g))$ is infinite finishes the proof of the theorem.
\qed

\section{Comparison of bi-invariant metrics on $\Ham(\B\Sigma_g)$}
\label{sec-comparision}

The most famous metric on the group $\Ham(\B M)$ of a symplectic manifold $(\B M,\o)$
is the Hofer metric, see \cite{Ho,LM}. The associated norm is defined by
$$
\|h\|_{\OP{Hofer}}:=\inf_{H_t}\int_0^1\max_{\B M} H_t - \min_{\B M} H_tdt,
$$
where $H_t$ is a normalized Hamiltonian function generating
the Hamiltonian flow $h_t$ from the identity to $h=h_1$. Let $\a_1$ be a curve which represents a generator $[\a_1]$ of $\pi_1(\B\Sigma_g)$ defined in \eqref{eq:Pi-1-presentation}. Let $h\in \Ham(\B\Sigma_g)$ be a diffeomorphism generated by a function $H\colon\B\Sigma_g\to\B R$ which satisfies the following conditions:

\begin{itemize}
\item
there exists a positive number $C$ such that $H|_{\a_1}>C$,
\item
$H|_{\a_1}$ is not constant.
\end{itemize}

It implies that all powers of $h$ are
autonomous diffeomorphisms and hence $\|h^n\|_{\OP{Aut}} =~1$ for all $n\in \B Z$.
On the other hand, $nH$ generates $h^n$ and $nH|_{\a_1}>Cn$,
hence by a result of L. Polterovich \cite{P-diameter} we have  $\|h^n\|_{\OP{Hofer}}\geq Cn$.
On the other hand, for every $\epsilon>0$, one can easily construct an autonomous diffeomorphism whose Hofer norm is less then~$\epsilon$.
This shows that the identity homomorphism between the autonomous metric and the Hofer metric is
not Lipschitz in neither direction.

In this paper we showed that there exists a Hamiltonian diffeomorphism $f$ supported in some disc such that $\|f^k\|_{\Aut}\xrightarrow[k\rightarrow\infty]{}\infty$, but $\|f^k\|_{\Frag}=~1$ for all $k$. On the other hand, one can show that there exists an autonomous diffeomorphism $g$, such that $\|g^k\|_{\Frag}\xrightarrow[k\rightarrow\infty]{}\infty$. It follows that the identity homomorphism between
the autonomous metric and the fragmentation metric is not Lipschitz in neither direction as well.

\subsection*{Acknowledgments}
The author would like to thank Mladen Bestvina, Danny Calegari, Louis Funar, Juan Gonzalez-Meneses, Jarek Kedra, Chris Leininger, Dan Margalit and Pierre Py for fruitful discussions. Part of this work has been done during the author's stay in Mathematisches For\-schung\-sinstitut Oberwolfach and in Max Planck Institute for Mathematics in Bonn. The author wishes to express his gratitude to both institutes. He was supported by the Oberwolfach Leibniz fellowship and Max Planck Institute research grant.

\bigskip

Max-Planck-Institut f$\ddot{\textrm{u}}$r Mathematik, 53111 Bonn, Germany\\
\emph{E-mail address:} \verb"brandem@mpim-bonn.mpg.de"

\end{document}